\newcommand{\dt}{\partial_t}
\newcommand{\Dt}{\mathrm{D}_t}
\newcommand{\dhori}{\partial_h}
\newcommand{\dv}{\mathrm{div}\,}
\newcommand{\dz}[0]{\partial_z}
\newcommand{\nablah}{\nabla_h}
\newcommand{\Deltah}{\Delta_h}
\newcommand{\idx}{\,dx}
\newcommand{\initial}{\text{initial data}}
\newcommand{\moistidx}{\lbrace \mrm{v}, \mrm{c}, \mrm{r} \rbrace}
\newcommand{\thmoistidx}{\lbrace \T, \mrm{v}, \mrm{c}, \mrm{r} \rbrace}
\newcommand{\cb}[0]{\mathcal C_\mrm{b}}
\newcommand{\moistvar}[0]{\lbrace \mathfrak q_\mrm{j}^o \rbrace_{\mrm{j}\in \moistidx}}
\newcommand{\abs}[2]{\bigl| #1 \bigr|^{#2}}
\newcommand{\norm}[2]{\bigl\Arrowvert #1 \bigr\Arrowvert_{#2}}
\newcommand{\Lnorm}[1]{L^{#1}(\Omega)}
\newcommand{\Hnorm}[1]{H^{#1}(\Omega)}
\newcommand{\bHnorm}[1]{H^{#1}(\Gamma)}
\renewcommand{\vec}[1]{\mathbf{#1}}
\newcommand{\mrm}[1]{\mathrm{#1}}
\newtheorem{thm}{Theorem}[section]
\newtheorem{lm}{Lemma}
\theoremstyle{remark}
\newtheorem{remark}{Remark}
\newcommand{\rhod}[0]{\rho_{\mathrm{d}}}
\newcommand{\qv}[0]{q_{\mathrm{v}}}
\newcommand{\qc}[0]{q_{\mathrm{c}}}
\newcommand{\qr}[0]{q_{\mathrm{r}}}
\newcommand{\T}[0]{\mathcal T}
\newcommand{\Sev}[0]{S_{\mathrm{ev}}}
\newcommand{\Scd}[0]{S_{\mathrm{cd}}}
\newcommand{\Sac}[0]{S_{\mathrm{ac}}}
\newcommand{\Scr}[0]{S_{\mathrm{cr}}}
\newcommand{\Vr}[0]{V_{\mathrm{r}}}
\numberwithin{equation}{section}
\title{Local well-posedness of a system coupling compressible atmospheric dynamics and a micro-physics model of moisture in air}
\author{Sabine Doppler\footnote{Fakult\"at f\"ur Mathematik, Universit\"at Wien, Wien, Austria. Email: \href{mailto:sabine.doppler@univie.ac.at}{sabine.doppler@univie.ac.at}},
\,
Rupert Klein\footnote{Institut f\"ur Mathematik, Freie Universit\"at Berlin, Berlin, Germany.
Email: \href{mailto:rupert.klein@math.fu-berlin.de}{rupert.klein@math.fu-berlin.de}}, 
\,
Xin Liu\footnote{
Department of Mathematics, Texas A{\&}M University, College Station, TX 77843, USA. 
Email: \href{mailto:xliu23@tamu.edu}{xliu23@tamu.edu}}, 
\, and \,
Edriss S. Titi\footnote{Department of Mathematics, Texas A{\&}M University, College Station,  TX 77840, USA.  Department of Applied Mathematics and Theoretical Physics, University of Cambridge, Cambridge CB3 0WA, UK.
		Department of Computer Science and Applied Mathematics, Weizmann Institute of Science, Rehovot 76100, Israel. Email: \href{mailto:titi@math.tamu.edu}{titi@math.tamu.edu}\, and \, \href{mailto:Edriss.Titi@maths.cam.ac.uk}{Edriss.Titi@maths.cam.ac.uk}}
}
\date{August 27,2024}
\begin{document}
\allowdisplaybreaks
\maketitle

\begin{abstract}
	We establish here the local-in-time well-posedness of strong solutions with large initial data to a system coupling compressible atmospheric dynamics with a refined moisture model introduced in Hittmeir and Klein \cite{Hittmeir2018} (Theor. Comput. Fluid Dyn. 32:137--164, 2018). This micro-physics moisture model is a refinement of the one communicated by Klein and Majda \cite{Klein2006} (Theor. Comput. Fluid Dyn. 20:525--551, 2006), which serves as a foundation of multi-scale asymptotic expansions for moist deep convection in the atmosphere. Specifically, we show, in this paper, that the system of compressible Navier--Stokes equations coupled with the moisture dynamics is well-posed. In particular, the micro-physics of moisture model incorporates the phase changes of water in moist air. 

\vspace{0.5cm}

\noindent{\bf Keywords:} well-posedness; strong solutions; phase changes; micro-physics moisture model; moist atmospheric flows. 

\noindent{\bf MSC2010:} 35M86; 35Q30; 35Q35; 35Q86; 76N10.

\end{abstract}

\tableofcontents

\section{Introduction}

Weather prediction, which affects our daily life closely, has been developed extensively thanks to modern computers. However, due to the complex dynamics involved, it still remains one of the most challenging topics in meteorology, both analytically and numerically. In particular, the thermodynamics of water and dry air, as well as the hydrodynamics coupling, calls for sophisticated modeling and careful mathematical analysis. Our goal in this paper is to perform rigorous analysis on an atmospheric model describing the complex phase changes in the moist atmosphere, and provide a solid mathematical foundation for applications in numerical simulation. 

Water in the atmosphere exists in the basic forms of vapor water, cloud water, and rain water. Briefly speaking, evaporation changes the rain water into vapor water; condensation turns vapor water into cloud water; both accumulation and collection by {rain} convert cloud water into rain water.  Notably, water eventually exits the dynamics in this model by raining. 
In figure \ref{fig:moisture-dyn}, we summarize such dynamics of phase changes in terms of mixing ratios of vapor water, cloud water, and rain water, i.e. $\qv, \qc, \qr $, respectively. 
Such phase changing dynamics are
modeled by the balance equations of vapor water, cloud water, and rain water. Also, in the spirit of Kessler \cite{Kessler1969} 
and Grabowski and Smolarkiewicz \cite{Grabowski1996}, a basic form of the bulk micro-physics closure is taken in Hittmeir and Klein \cite{Hittmeir2018} (see also Klein and Majda \cite{Klein2006}), with which a close system of the flow of moist air is obtained. 

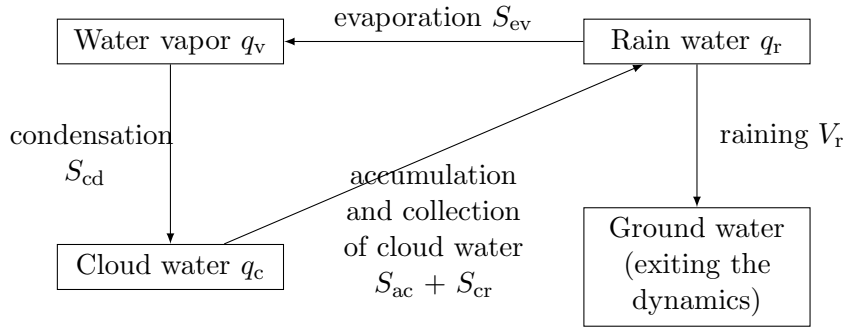
\begin{figure}[h]\label{fig:moisture-dyn}
\centering
\begin {tikzpicture}[-latex, auto, node distance = 3 cm and 7 cm, on grid,
state/.style ={rectangle,
draw, black , text=black, text width = 2.75 cm, align = center}]
\node[state] (A)  {
Water vapor $ \qv $ 
};
\node[state] (B) [below =of A] {{Cloud water $ \qc $}};
\node[state] (C) [right = of A] {{Rain water $ \qr $}};
\node[state] (D) [below =of C] {{Ground water (exiting the dynamics)}};
\path (C) edge [left] node[above] {evaporation $ \Sev $} (A);
\path (A) edge [below]  node[left, text width = 2 cm, align = center] {condensation $ \Scd $} (B);
\path (B) edge [right] node[below,text width = 3 cm, align = center] {accumulation and collection of cloud water $ \Sac + \Scr $} (C);
\path (C) edge [below]  node[right, text width = 2 cm, align = center] {raining $ \Vr $} (D);
\end{tikzpicture}
\caption{Phase changes in moist air}
\end{figure}

Such moisture dynamics is first studied, analytically, in Hittmeir et al. \cite{Hittmeir2017}. Later, coupled with the hydrodynamics provided by the viscous incompressible primitive equations, the same authors in \cite{Hittmeir2019} establish the global well-posedness of strong solutions. See also \cite{Hittmeir2023} for a recent development.

{\it Prior} to the aforementioned studies, considering one moisture quantity, some mathematical analysis has been carried out, for instance, in Coti Zelati et al. \cite{CotiZelati2012,CotiZelati2013,Zelati2015a,Bousquet2014}. Among these works, a Heaviside function is taken to be part of the source term, playing the role of switching between saturated and undersaturated regimes. With differential inclusions and variational techniques, well-posedness and regularity of solutions are investigated.
In Li and Titi \cite{Li2015}, a {coupled} system between the moisture and the barotropic mode and the first baroclinic mode of velocity, proposed by Frierson et al. \cite{Frierson2004}, is shown to be global well-posed with the effect of precipitation. In addition, a relaxation limit is investigated. 
In Majda and Souganidis \cite{Majda2010}, the existence and uniqueness of weak solutions to a linearized hyperbolic system is carried out, together with the involving relaxation limit, which eventually gives rise to a free boundary description of the precipitation fronts.

In this work, we aim at studying the local-in-time well-posedness with large initial data of the original model from \cite{Hittmeir2018} (see also \cite{Klein2006}). Namely, we take into account {the} dynamics of compressible dry air, and consider the compressible Navier-Stokes equations as the hydrodynamics model. See, e.g., \eqref{eq:dry-air-mass-origin}--\eqref{eq:rain-ratio-origin}, below.  
We refer the reader to the introduction as well as the references in 
\cite{Hittmeir2018,Klein2006,Hittmeir2019,Hittmeir2017,Emanuel1994}, for more physical descriptions and considerations of the model studied here. 

{There is a large body of work dedicated to the study of the compressible Navier-Stokes equations}. To mention only a few, the well-known Lions-Feireisl weak solutions can be found in \cite{Lions1998,Feireisl2004}. 
In Cho et al. \cite{Cho2004,Cho2006a}, the local well-posedness of strong solutions, which allows for vacuum, is shown. 
The global well-posedness near equilibrium states is obtained in Matsumura et al. \cite{Matsumura1980,Matsumura1983}. The issue of vacuum in the {compressible} Navier-Stokes equations can be found in Xin et al. \cite{zpxin1998,XinYan2013,Li2017b}. We remark here that, even though vacuum is one of the main issues in the compressible hydrodynamics equations, the examined model in this paper does not allow us to consider vacuum in general (see, e.g., \eqref{eq:rain-ratio}).
{Let $\rho_\mrm d, \rho_\mrm v,  \rho_\mrm c, \rho_\mrm r $ denote the densities of dry air, vapor water, cloud water and rain water, respectively. Then the mixing ratios for vapor water, cloud water, and rain water are defined as $q_\mrm j=\rho_\mrm j/\rho_\mrm d$ for $\mrm j\in \{\mrm v,\mrm c,\mrm r\}$}. The air velocity of the atmosphere is denoted by $ \vec u $. Moreover, $ p, \theta $, and $ \T $ represent the pressure, the potential temperature, and the thermodynamic temperature, respectively. We also take the terminal sedimentation velocity $ \Vr $ to be a given smooth function. 

\bigskip 

Let $ \Dt $ denote the material derivative $ \dt + ( \vec{u} \cdot \nabla ) $. Then without considering the eddy viscosities and diffusions, the governing equations of the flow of moist air in the atmosphere read (cf. \cite{Hittmeir2018})
\begin{align}
	\Dt \rhod + \rhod \dv \vec{u} & = 0
	, \label{eq:dry-air-mass-origin} \\
	\rho \Dt \vec{u} - \rhod \qr \Vr \dz \vec{u} + \nabla p & = 
	- \rho g \vec{e}_3, \label{eq:momentum-origin} \\
	 c_\mathrm{\nu} \T \Dt \log \theta + \sigma \T \Dt \log p  + L(\T) \Dt \qv & = 
	    c_\mathrm{l} \qr \T \Vr (\dz \log \theta {\nonumber} \\
	    & ~~~~ ~~~~ + \dfrac{R_\mathrm{d}}{c_\mathrm{pd}}\dz \log p) , \label{eq:temperature-origin} \\
	  \Dt \qv & = \Sev - \Scd 
	  , \label{eq:vapor-ratio-origin} \\
	  \Dt \qc & = \Scd - ( \Sac + \Scr ) 
	  , \label{eq:cloud-ratio-origin} \\
	  \Dt \qr - \dfrac{1}{\rhod} \dz (\rhod \qr \Vr) & = ( \Sac + \Scr ) - \Sev 
	  , \label{eq:rain-ratio-origin}
\end{align}
where
	\begin{align}
		\text{moist air density} ~ & \rho := \rhod (1 + \qv + \qc + \qr), \label{def:total-density} \\
		\text{pressure} ~ & p := \rhod ( R_\mathrm{d} + R_\mathrm{v} \qv) \T, \label{def:total-pressure} \\
		\text{temperature} ~ &\T  := \theta \biggl( \dfrac{p}{p_{\mathrm{ref}}} \biggr)^{\frac{\gamma-1}{\gamma}}, ~~ \gamma = \dfrac{c_\mathrm{pd}}{c_\mathrm{pd}-R_\mathrm{d}} > 1, \label{def:total-temperature} \\
		\text{velocity} ~ &\vec{u}  := (\vec{v}, w)^\top \in \mathbb R^2 \otimes \mathbb R, \label{def:velocity} \\
		\text{mixed specific heat capacity} ~ &c_\mathrm{\nu}  :=  c_{\mathrm{pd}} + c_{\mathrm{pv}} \qv + c_\mathrm{l} ( \qc + \qr ) , \label{def:mixed-heat-capacity} \\
		\text{mixed gas constant} ~ &\sigma  := \biggl( \dfrac{c_\mathrm{pv}}{c_\mathrm{pd}} R_\mathrm{d} - R_\mathrm{v} \biggr) \qv + \dfrac{c_{\mathrm{l}}}{c_\mathrm{pd}} R_\mathrm{d} ( \qc + \qr) , \label{def:mixed-fluid-constant} \\
		\text{latent heat of condensation} ~ & L(\T) :=  L_\mathrm{ref} + ( c_\mathrm{pv} - c_\mathrm{l} ) ( \T - \T_\mathrm{ref}). \label{def:latent-heat-condensation}
	\end{align}
	The saturation mixing ratio 
	\begin{equation}\label{def:saturation-mixing-ratio}
		q_\mrm{vs}:= q_\mrm{vs}(p, \T)
	\end{equation}
	is a non-negative, uniformly bounded, and Lipschitz continuous function of $ p $ and $ \T $. We also require $ q_\mrm{vs} $ to satisfy
	\begin{equation}\label{cnd:saturation-mixing-ratio}
		0 \leq q_\mrm{vs} \leq q_{\mrm{vs}}^* < \infty , ~~~ \text{and} ~~ q_\mrm{vs} \cdot \T^- = 0. 
	\end{equation}

{As source terms for moisture processes}, we consider the rates of evaporation of rain water, the condensation of vapor water to cloud water and the inverse evaporation process, the auto-conversion of cloud water into rain water by accumulation of microscopic droplets, and the collection of cloud water by falling rain, defined by 
\begin{equation}\label{def:state-variable-moist}
\begin{aligned}
	\Sev & := c_\mathrm{ev} \dfrac{p}{\rho} ( q_\mathrm{vs} - \qv) ^+ \qr  \\
	& ~ = c_\mathrm{ev} \T \dfrac{(R_\mathrm{d}+R_\mathrm{v}\qv)}{(1+\qv + \qc + \qr)} ( q_\mathrm{vs} - \qv) ^+ \qr , \\
	\Scd & := c_\mathrm{cd} ( \qv - q_\mathrm{vs} ) \qc + c_\mathrm{cn} ( \qv - q_\mathrm{vs} )^+ q_\mathrm{cn}, \\
	\Sac & :=  c_\mathrm{ac} ( \qc - q_\mathrm{ac})^+,  \\
	\Scr & :=  c_\mathrm{cr} \qc \qr,
\end{aligned}
\end{equation}
respectively, in the spirit of \cite{Kessler1969,Grabowski1996}. 

In the above, $ R_\mrm d, R_\mrm v, c_\mrm{pd}, c_\mrm{pv}, c_\mrm{l}, c_\mrm{ev}, c_\mrm{cd}, c_\mrm{cn}, c_\mrm{ac}, c_\mrm{cr}, p_\mrm{ref}, L_\mrm{ref}, \T_\mrm{ref}, q_\mrm{vs}^*, q_\mrm{ac}  $ are assumed to be given positive constants, with $c_\mrm{pd} > R_\mrm d$. Moreover, for any function $ f $, the non-negative and non-positive parts are defined in \eqref{def:sign-op}, below. 
We refer the reader to \cite{Hittmeir2018} for the physical terminologies and meanings of these quantities. 

In this paper, we will consider $ \rhod, \vec{u}, \T, \qv, \qc, \qr $ to be our unknowns, and we will write down the equations that govern them in the following. 

Using \eqref{def:total-pressure} and \eqref{def:total-temperature}, one can write down
\begin{align*}
	\theta & = \dfrac{ \T \cdot p_\mathrm{ref}^{\frac{\gamma-1}{\gamma}} }{ p^{\frac{\gamma-1}{\gamma}} } = \dfrac{ \T^{\frac{1}{\gamma}} p_\mathrm{ref}^{\frac{\gamma-1}{\gamma}} }{ \rhod^{\frac{\gamma-1}{\gamma}} (R_\mathrm{d} + R_\mathrm{v} \qv )^{\frac{\gamma-1}{\gamma}} }.
\end{align*}
Therefore, one can derive, from \eqref{eq:dry-air-mass-origin}, \eqref{eq:temperature-origin}, and \eqref{eq:vapor-ratio-origin}, that $ \T $ satisfies, 
\begin{equation}\label{eq:temperature-origin-2}
	\begin{aligned}
	 & \biggl( \dfrac{c_\mathrm{\nu}}{\gamma} + \sigma\biggr) [ \dt \T + (\vec{u}\cdot\nabla) \T ]  = \biggl(\sigma - \dfrac{R_\mathrm{d}}{c_\mathrm{pd}}c_\mathrm{\nu} \biggr) \T \dv \vec{u} \\
	 & ~~ - \biggl\lbrack \biggl(\sigma - \dfrac{R_\mathrm{d}}{c_\mathrm{pd}}c_\mathrm{\nu} \biggr) \dfrac{ R_\mathrm{v} \T}{R_\mathrm{d} + R_{\mathrm{v}}\qv } + L(\T) \biggr\rbrack ( \Sev - \Scd ) \\
	 & ~~ + c_\mathrm{l} \qr \Vr \dz \T,  \\
	\end{aligned}
\end{equation}
where we have used the fact that 
$$
\dz \log \theta + \dfrac{R_\mathrm{d}}{c_\mathrm{pd}}\dz \log p = \dz \log \T.
$$


Consequently,
after putting all equations together and adding back the eddy viscosities and diffusions, we arrive at the following system of equations:

\begin{gather}
	 \dt \rhod + \dv (\rhod \vec{u}) =  0, \label{eq:dry-air-mass} \\
	 \begin{gathered} \rhod Q_\mathrm{m} [\dt \vec{u} + (\vec{u} \cdot \nabla) \vec{u}]  - \rhod \qr \Vr \dz \vec{u} + \nabla p = \mu \Delta \vec{u} + (\mu + \lambda) \nabla \dv \vec{u} \\
	  - \rhod Q_\mathrm{m} g \vec{e}_3, \end{gathered} \label{eq:momentum} \\
	 \begin{gathered}
		 Q_\mathrm{th} [ \dt \T + (\vec{u}\cdot\nabla) \T ]  - c_\mathrm{l} \qr \Vr \dz \T = \kappa \Delta \T 
		 + Q_\mathrm{cp} \T \dv \vec{u} \\
	  	 - ( Q_1 \T + Q_2 ) ( \Sev - \Scd  ) ,\end{gathered} \label{eq:temperature} \\
	  \dt \qv + (\vec{u}\cdot\nabla) \qv  = \Sev - \Scd + \Delta \qv, \label{eq:vapor-ratio} \\
	  \dt \qc + (\vec{u}\cdot\nabla) \qc  = \Scd - ( \Sac + \Scr ) + \Delta \qc, \label{eq:cloud-ratio} \\
	  \dt \qr + (\vec{u}\cdot\nabla) \qr - \dz(\qr \Vr) - \qr \Vr \dz \log \rhod = ( \Sac + \Scr ) - \Sev + \Delta \qr, \label{eq:rain-ratio}
\end{gather}
where $ \mu > 0 , ~ 2\mu + 3\lambda > 0$, and $ \kappa > 0 $ denote the shear viscosity constant, the bulk viscosity constant, and the heat conductivity constant, respectively. Here, we have used the following notations: 
\begin{equation}\label{def:state-variable}
	\begin{gathered}
		Q_\mathrm{m} := 1 + \qv + \qc + \qr, \\
		Q_\mathrm{th} := \dfrac{c_\mathrm{\nu}}{\gamma} + \sigma = \dfrac{c_\mathrm{pd}}{\gamma} + \biggl( \dfrac{c_\mathrm{pv}}{\gamma} + \dfrac{c_\mathrm{pv}}{c_\mathrm{pd}}R_\mathrm{d} - R_\mathrm{v} \biggr) \qv  + \biggl( \dfrac{c_\mathrm{l}}{\gamma} + \dfrac{c_\mathrm{l}}{1c_\mathrm{pd}}R_\mathrm{d} \biggr)(\qc + \qr ), \\
		Q_\mathrm{cp} := \sigma - \dfrac{R_\mathrm{d}}{c_\mathrm{pd}} c_\mathrm{\nu} = - R_\mathrm{d} - R_\mathrm{v} \qv, \\
		Q_1 := \dfrac{R_\mathrm{v}}{R_\mathrm{d} + R_\mathrm{v} q_\mathrm{v}} \biggl( \sigma - \dfrac{R_\mathrm{d}}{c_\mathrm{pd}} c_\mathrm{\nu} \biggr) + c_\mathrm{pv} - c_\mathrm{l} =  c_\mathrm{pv} - c_\mathrm{l}- R_\mathrm{v} , \\
		Q_2 := L_\mathrm{ref} - ( c_\mathrm{pv} - c_\mathrm{l} ) \T_\mathrm{ref}.
	\end{gathered}
\end{equation}


We consider \eqref{eq:dry-air-mass} -- \eqref{eq:rain-ratio} in the domain $ \Omega := 2 \mathbb{T}^2 \times (0,1) \subset \mathbb{R}^3 $, where $ 2 \mathbb{T}^2 \subset \mathbb{R}^2 $ is the periodic domain with two periods in either direction on the two-dimensional plane. On the boundaries $ \lbrace z = 0, 1 \rbrace $, the following boundary conditions are imposed for the above system of equations:
\begin{align}
	\text{stress free and no penetration} \qquad \partial_{z} \vec{v}\big|_{z=0,1} & = 0, ~ w\big|_{z=0,1}  = 0, \label{bc:momentum} \\
	\text{heat flux} \qquad \partial_{z} \T \big|_{z=0,1} & = \alpha_{\T}^b(\T^{b} - \T)\big|_{z=0,1}, \label{bc:temperature} \\
	\text{moisture flux} \qquad \partial_{z} q_\mathrm{j} \big|_{z=0,1} & = \alpha_\mathrm{j}^b(q_{\mathrm{j}}^{b} - q_\mathrm{j})\big|_{z=0,1}, ~ \mathrm{j} \in \lbrace \mathrm{v}, \mathrm{c}, \mathrm{r} \rbrace, \label{bc:moisture}
\end{align}
with some constants $ \alpha^b_{\mathrm{j}}\big|_{z=0,1} $, $ \mathrm{j} \in  \lbrace \T, \mathrm{v}, \mathrm{c}, \mathrm{r} \rbrace $, satisfying
\begin{equation}\label{bc:sign}
	\alpha_\mrm{j}^b \geq 0 ~ \text{on} ~ \lbrace z = 1 \rbrace, ~~\text{and} ~ 
	\alpha_\mrm{j}^b \leq 0 ~ \text{on} ~ \lbrace z = 0 \rbrace. 
\end{equation}

We assume, in this paper, that $ \alpha_{\T}^b, \T^b, \lbrace \alpha_\mrm{j}^b, q_\mrm{j}^b\rbrace_{\mrm{j}\in \moistidx} $ are given bounded and smooth functions. 

\begin{remark}
Condition \eqref{bc:sign} is necessary to show the non-negativity of $ \T $ and of $ \lbrace q_\mrm{j} \rbrace_{\mrm j \in \moistidx} $.
\end{remark}

Now we state the main theorem of this paper:
\begin{thm}\label{thm}
	Let the initial data $ (\rho_{\mrm d, in}, \vec u_{in}, \mathcal T_{in}, \lbrace q_{\mrm j, in} \rbrace_{\mrm j \in \moistidx}) $ be sufficiently smooth, satisfying \eqref{bc:momentum}--\eqref{bc:moisture}, \eqref{def:initial-bound}, and \eqref{def:initial-bound-2}, below. Then, there exist a finite time $ T^* \in (0,\infty) $, depending on the initial data, and a unique strong solution to equations \eqref{eq:dry-air-mass}--\eqref{eq:rain-ratio}, satisfying the following regularity and estimates: 
	\begin{gather*}
		\rhod^{1/2}, \log \rhod \in L^\infty(0,T^*;\Hnorm{2}), \\
		 \dt\rhod^{1/2}, \dt \log \rhod \in L^\infty(0,T^*;\Hnorm{1}),\\
		 \vec u, \T, \lbrace q_\mrm{j} \rbrace_{\mrm{j} \in\moistidx} \in L^\infty(0,T^*;\Hnorm{2})\cap L^2(0,T^*;\Hnorm{3}), \\
		\dt \vec u, \dt  \T, \lbrace \dt q_\mrm{j} \rbrace_{\mrm{j} \in\moistidx}\in L^\infty(0,T^*;\Lnorm{2}) \cap L^2(0,T^*;\Hnorm{1}),
	\end{gather*}
	and
	\begin{gather*}
	 \sup_{0\leq s \leq T^*} \bigl(\norm{\rhod^{1/2}(s), \log \rhod (s), \vec u (s), \T (s), \lbrace q_\mrm j (s)\rbrace_{\mrm j \in \moistidx} }{\Hnorm{2}}^2 \\
	 ~~~~ + \norm{\dt \rhod^{1/2}(s), \dt \log \rhod(s)}{\Hnorm{1}}^2 \\
	 ~~~~ + 
	\norm{\dt \vec u(s), \dt \T(s), \lbrace \dt q_\mrm j (s) \rbrace_{\mrm j \in \moistidx}}{\Lnorm{2}}^2 \bigr) \\
	 + \int_0^{T^*} \norm{\vec u (s), \T (s), \lbrace q_\mrm j (s)\rbrace_{\mrm j \in \moistidx} }{\Hnorm{3}}^2 \\
	 ~~~~ + 
	\norm{\dt \vec u(s), \dt \T(s), \lbrace \dt q_\mrm j (s) \rbrace_{\mrm j \in \moistidx}}{\Hnorm{1}}^2 \,ds  \leq \mathcal{K} < \infty,
	\end{gather*}
	where $ \mathcal{K} $ depends only on initial data. In addition, the solution is continuously depending on the initial data in a manner that is described in \eqref{stability:001} and \eqref{stability:002}, below. 
\end{thm}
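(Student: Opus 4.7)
The plan is to construct the solution via a linearization-iteration scheme in the spirit of the Cho--Choe--Kim argument for the compressible Navier--Stokes system, adapted to accommodate the moisture coupling. Starting from a zeroth iterate $(\rhod^{(0)}, \vec u^{(0)}, \T^{(0)}, \lbrace q_\mrm{j}^{(0)} \rbrace_{\mrm j \in \moistidx})$ chosen consistent with the initial data, I would iteratively define $(\rhod^{(n+1)}, \vec u^{(n+1)}, \T^{(n+1)}, \lbrace q_\mrm{j}^{(n+1)} \rbrace)$ by freezing the $n$-th iterate inside the coefficients $Q_\mrm m, Q_\mrm{th}, Q_\mrm{cp}, Q_1$, in the transport drift $\vec u^{(n)} \cdot \nabla$, and inside the nonlinear sources $\Sev, \Scd, \Sac, \Scr$ of \eqref{eq:dry-air-mass}--\eqref{eq:rain-ratio}. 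The continuity equation then becomes a linear transport equation for $\rhod^{(n+1)}$, solvable along the characteristics of $\vec u^{(n)}$; the remaining equations decouple into linear parabolic problems with variable coefficients (the sedimentation $-\dz(q_\mrm{r} V_\mrm r)$ in \eqref{eq:rain-ratio} being kept as a first-order drift on the left-hand side), for which standard Galerkin or semigroup theory, together with elliptic regularity for the boundary conditions \eqref{bc:momentum}--\eqref{bc:moisture}, yields existence in the claimed regularity class.

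\textbf{Uniform a priori estimates.} The heart of the argument is to close bounds uniform in $n$ on a short time interval $[0,T^*]$. For the density, since \eqref{eq:dry-air-mass} carries no diffusion, I would propagate $H^2$ regularity of both $\rhod^{1/2}$ and $\log \rhod$ by standard commutator estimates, using the Lagrangian representation to obtain a positive lower bound of $\rhod$ over $[0,T^*]$ and thereby ensure that the nonlinear coefficients in \eqref{eq:momentum}--\eqref{eq:rain-ratio} remain well controlled. For $\vec u, \T, \lbrace q_\mrm{j} \rbrace$, I would follow the Cho--Choe--Kim scheme: (i) derive basic $L^\infty_t L^2_x \cap L^2_t H^1_x$ energy estimates exploiting the dissipation from $\mu \Delta \vec u$, $(\mu+\lambda)\nabla \dv \vec u$, $\kappa \Delta \T$, and $\Delta q_\mrm{j}$, together with the sign condition \eqref{bc:sign} which yields the non-negativity of $\T$ and $q_\mrm{j}$ via an appropriate maximum-principle / energy argument; (ii) time-differentiate each parabolic equation to bound $\dt \vec u, \dt \T, \dt q_\mrm{j}$ in $L^\infty_t L^2_x \cap L^2_t H^1_x$, with initial values of these time derivatives controlled from the equations at $t=0$ under the compatibility conditions of the theorem; (iii) recover $L^\infty_t H^2_x \cap L^2_t H^3_x$ by applying elliptic regularity to the stationary versions of the parabolic equations, treating $\dt$ as a datum. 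Combining these with the density estimate and with the Lipschitz continuity of $q_\mrm{vs}$ and of $\Sev, \Scd, \Sac, \Scr$ inherited from \eqref{def:state-variable-moist} and \eqref{cnd:saturation-mixing-ratio}, produces a nonlinear inequality of Gronwall type that closes over a short time interval depending only on the initial data.

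\textbf{Passage to the limit, uniqueness, continuous dependence, and main obstacle.} Once the uniform bounds are in place, I would show that $(\rhod^{(n)}, \vec u^{(n)}, \T^{(n)}, \lbrace q_\mrm{j}^{(n)} \rbrace)$ is Cauchy in a weaker norm, typically $L^\infty_t L^2_x \cap L^2_t H^1_x$, by subtracting consecutive equations and using interpolation to absorb the lost derivatives against the uniform higher-regularity estimates. Passing to the limit produces a strong solution in the stated class, and essentially the same difference estimate applied to two solutions with possibly distinct data yields uniqueness and the continuous dependence asserted in \eqref{stability:001}--\eqref{stability:002}. The main technical obstacle I anticipate is closing the density estimate jointly with the coupling: because \eqref{eq:dry-air-mass} is purely hyperbolic, any $H^2$ control of $\rhod$ relies on $\nabla \vec u \in L^1_t L^\infty_x$, which in three dimensions just fails to follow from $L^\infty_t H^2_x$ and must be gained through the $L^2_t H^3_x$ regularity of $\vec u$; every coupling term -- most notably $\nabla p = \nabla \lbrack \rhod (R_\mrm d + R_\mrm v \qv) \T \rbrack$, $Q_\mrm{cp} \T \dv \vec u$, the transport $\rhod \qr \Vr \dz \vec u$ in \eqref{eq:momentum}, and the asymmetric drift in \eqref{eq:rain-ratio} -- must be balanced delicately against the parabolic dissipation and the density estimate, so that no step forces the loss of a derivative beyond what is available. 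The sedimentation term $-\dz(\qr \Vr) - \qr \Vr \dz \log \rhod$ in particular demands integration by parts against the boundary conditions \eqref{bc:moisture} and careful use of the $H^1$ control of $\log \rhod$.
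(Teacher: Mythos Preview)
Your proposal is correct and follows essentially the same route as the paper: freeze the previous iterate to obtain a linear transport equation for $\rhod$ and linear parabolic problems for $\vec u, \T, q_\mrm j$, derive uniform $L^\infty_t H^2_x \cap L^2_t H^3_x$ bounds via the Cho--Choe--Kim scheme (energy, time-differentiated energy, then elliptic recovery), and close by showing contraction in $L^\infty_t L^2_x \cap L^2_t H^1_x$. The paper adds two technical devices you glossed over: it first passes to an approximation system in which $Q_\mrm m, Q_\mrm{th}$ and the moisture sources are built from $q_\mrm j^+$ rather than $q_\mrm j$ (so that the parabolic coefficients stay uniformly positive along the iteration, with non-negativity of $\T, q_\mrm j$ proved only \emph{after} the fixed point is obtained), and it homogenizes the Robin conditions \eqref{bc:temperature}--\eqref{bc:moisture} by the explicit change of unknowns \eqref{def:homo-variable} before setting up the Galerkin scheme.
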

\begin{proof}
	The proof of Theorem \ref{thm} is divided into sections \ref{sec:approximation}--\ref{sec:well-posedness}. 
\end{proof}

\bigskip

Before moving to the proof of Theorem \ref{thm}, we comment on the strategy here. Notice that $ \T, \lbrace q_\mrm j \rbrace_{\mrm j \in \mrm v, \mrm c, \mrm r} $ representing the temperature, the mixing ratios of vapor, cloud, and rain, which are non-negative. To capture such physical properties, instead of system \eqref{eq:dry-air-mass}--\eqref{eq:rain-ratio}, we introduce the approximation system \eqref{eq:dry-air-mass-app}--\eqref{eq:rain-ratio-app}, below. Then once the well-posedness of such an approximation system is obtained, after showing the non-negativity of the aforementioned variables, the solution that solves the approximation system also solves system \eqref{eq:dry-air-mass}--\eqref{eq:rain-ratio}. This is done in section \ref{sec:non-negativity}.

Moreover, to handle the inhomogeneous boundary conditions \eqref{bc:temperature} and \eqref{bc:moisture}, we introduce the new variables $ \mathfrak T, \lbrace \mathfrak q_\mrm j\rbrace_{\mrm j \in \moistidx} $ in \eqref{def:homo-variable}, below, which satisfy \eqref{eq:temperature-app-1}--\eqref{eq:rain-ratio-app-1} with homogeneous boundary conditions \eqref{bc:app-hom}, below. Therefore, solving the approximation system \eqref{eq:dry-air-mass-app}--\eqref{eq:rain-ratio-app} is equivalent to solving the system of equations \eqref{eq:dry-air-mass-app}--\eqref{eq:momentum-app} and \eqref{eq:temperature-app-1}--\eqref{eq:rain-ratio-app-1}, below. 

Consequently, the proof of Theorem \ref{thm} is reduced to establishing the well-posedness of \eqref{eq:dry-air-mass-app}--\eqref{eq:momentum-app} and \eqref{eq:temperature-app-1}--\eqref{eq:rain-ratio-app-1} with the homogeneous boundary conditions \eqref{bc:momentum} and \eqref{bc:app-hom}. This is done through a fixed point argument. To be more precise, the solution to the approximation system is constructed as the fixed point of a contraction mapping in the adequate topology. To define the contraction mapping, we introduce the associated linear system of equations \eqref{eq:dry-air-mass-lin}--\eqref{eq:rain-ratio-lin} with the boundary conditions \eqref{bc:app-lin}, below, by freezing some terms in \eqref{eq:dry-air-mass-app}--\eqref{eq:momentum-app} and \eqref{eq:temperature-app-1}--\eqref{eq:rain-ratio-app-1}. The corresponding contraction mapping $ \mathcal M $ is then defined as the mapping that maps the frozen variables to the solutions of the linear system (see \eqref{def:mapping}, below). By showing the contracting property of $ \mathcal M $ in section \ref{sec:contracting}, we obtain the unique solution to the nonlinear approximation system of equations \eqref{eq:dry-air-mass-app}--\eqref{eq:momentum-app} and \eqref{eq:temperature-app-1}--\eqref{eq:rain-ratio-app-1}. Then the proof is finished by establishing the well-posedness of such solutions. 

\smallskip 

The rest of this paper is organized as follows. In section \ref{sec:notation}, we introduce some notations being used hereafter. In section \ref{sec:approximation}, we introduce an approximation system to \eqref{eq:dry-air-mass}--\eqref{eq:rain-ratio}. Section \ref{sec:linear_contract} is dedicated to study the associated linear system, where the adequate bound of solutions and the {contraction} mapping property are shown. In section \ref{sec:well-posedness}, we finish the proof of our well-posedness theorem. In section \ref{appendix}, we collect the required estimates of the nonlinearity terms for the reader's reference and convenience.

\section{Notations}\label{sec:notation}
We use the following notations for coordinates and differential operator:
\begin{gather*}
	\vec{x} := (x,y,z)^\top, \\
	\vec{x}_h := (x,y)^\top, \\
	\dhori \in \lbrace \partial_x, \partial_y \rbrace, \\
	\nablah : = (\partial_x, \partial_y)^\top, \\
	\Deltah := \partial_{xx} + \partial_{yy}.
\end{gather*}
The non-negative part and the non-positive part of a function $ f $ are defined by
\begin{equation}\label{def:sign-op}
	f^+ := \dfrac{|f| + f}{2} \quad \text{and} \quad f^- := \dfrac{|f| - f}{2}.
\end{equation}
Also, 
\begin{equation*}
	\cb := \cb (\lbrace B_\mrm{j}, \psi_\mrm{j} \rbrace\big|_{\mrm{j}\in \thmoistidx} )
\end{equation*}
is a generic smooth function of $ \lbrace B_\mrm{j}, \psi_\mrm{j} \rbrace\big|_{\mrm{j}\in \thmoistidx} $, defined in \eqref{def:b-psi}, which might be different from line {to} line. {Moreover,}
\begin{equation*}
	\mathcal N(\cdot)
\end{equation*}
will be used to denote a smooth function of the arguments, where subscripts and superscripts might be also applied. 

\section{An approximation system}\label{sec:approximation}
To investigate the well-posedness of system of \eqref{eq:dry-air-mass} -- \eqref{eq:rain-ratio}, we first consider the following approximation system of equations:
\begin{gather}
	 \dt \rhod + \dv (\rhod \vec{u}) =  0, \label{eq:dry-air-mass-app} \\
	 \begin{gathered}\rhod Q_\mathrm{m}^+ [\dt \vec{u} + (\vec{u} \cdot \nabla) \vec{u}]  - \rhod \qr \Vr \dz \vec{u} + \nabla p = \mu \Delta \vec{u} + (\mu + \lambda) \nabla \dv \vec{u} \\ - \rhod Q_\mathrm{m}^+ g \vec{e}_3, \end{gathered} \label{eq:momentum-app} \\
	 \begin{gathered}
		 Q_\mathrm{th}^+ [ \dt \T + (\vec{u}\cdot\nabla) \T ]  - c_\mathrm{l} \qr \Vr \dz \T = \kappa \Delta \T  
		 + Q_\mathrm{cp} \T \dv \vec{u} \\
	  	 - ( Q_1 \T + Q_2 ) ( \Sev^+ - \Scd^+  ) ,\end{gathered} \label{eq:temperature-app} \\
	  \dt \qv + (\vec{u}\cdot\nabla) \qv  = \Sev^+ - \Scd^+ + \Delta \qv, \label{eq:vapor-ratio-app} \\
	  \dt \qc + (\vec{u}\cdot\nabla) \qc  = \Scd^+ - ( \Sac^+ + \Scr^+ ) + \Delta \qc, \label{eq:cloud-ratio-app} \\
	  \dt \qr + (\vec{u}\cdot\nabla) \qr - \dz(\qr \Vr) - \qr \Vr \dz \log \rhod = ( \Sac^+ + \Scr^+ ) - \Sev^+ + \Delta \qr,  \label{eq:rain-ratio-app}
\end{gather}
where the same boundary conditions \eqref{bc:momentum} -- \eqref{bc:moisture} are imposed, and 
\begin{equation}\label{def:state-variable-app}
\begin{aligned}
	Q_\mathrm{m}^+ & := 1 + \qv^+ + \qc^+ + \qr^+, \\
	Q_\mathrm{th}^+ & := \dfrac{c_\mathrm{pd}}{\gamma} + \biggl( \dfrac{c_\mathrm{pv}}{\gamma} + \dfrac{c_\mathrm{pv}}{c_\mathrm{pd}}R_\mathrm{d} - R_\mathrm{v} \biggr) \qv^+  + \biggl( \dfrac{c_\mathrm{l}}{\gamma} + \dfrac{c_\mathrm{l}}{c_\mathrm{pd}}R_\mathrm{d} \biggr)(\qc^+ + \qr^+ ), \\
	\Sev^+ & := c_\mathrm{ev} \T^+ \dfrac{(R_\mathrm{d}+R_\mathrm{v}\qv^+)}{(1+\qv^+ + \qc^+ + \qr^+)} ( q_\mathrm{vs} - \qv^+) ^+ \qr^+ , \\
	\Scd^+ & := c_\mathrm{cd} ( \qv^+ - q_\mathrm{vs} ) \qc^+ + c_\mathrm{cn} ( \qv - q_\mathrm{vs} )^+ q_\mathrm{cn}, \\
	\Sac^+ & :=  c_\mathrm{ac} ( \qc - q_\mathrm{ac})^+,  \\
	\Scr^+ & :=  c_\mathrm{cr} \qc^+ \qr^+.
\end{aligned}
\end{equation}

\begin{remark}
	In comparison to previous study in {\cite{Hittmeir2017, Hittmeir2019}}, the evaporation source term does not contain the general exponent $\beta\in(0,1]$ as exponent of $q_r$ anymore. Here only the case corresponding to $\beta=1$ is treated (see also in \cite{Hittmeir2023}). Then in particular,
	\begin{equation*}
		|S_\mrm j ^+| \lesssim 1 + |\T, \qv, \qc, \qr |^2, ~~ \mrm j \in \lbrace \mrm{ev}, \mrm{cd}, \mrm{ac}, \mrm{cr} \rbrace.
	\end{equation*}
\end{remark}

To get rid of the inhomogeneity of the boundary conditions \eqref{bc:temperature} and \eqref{bc:moisture}, we introduce a set of equivalent unknowns in the following. 
Notice that, \eqref{bc:temperature} and \eqref{bc:moisture} are in the form of 
\begin{equation}\label{bc:abstract}
		\partial_z \mathcal{F}\big|_{z=0,1} = \alpha_\mathcal{F}^b( \mathcal F^b - \mathcal{F} )\big|_{z=0,1}, ~~~~ \mathcal F \in \lbrace \mathcal T, \qv, \qc, \qr \rbrace.  
		\end{equation}
		We remind the reader that $ \alpha_{\mathcal F}^b $ takes different constants for $ z = 0, 1 $. Since $ \alpha_\mathcal{F}^b \big|_{z=0,1} = \bigl( \alpha_\mathcal{F}^b \big|_{z=0} (1-z) + \alpha_\mathcal{F}^b \big|_{z=1} z \bigr)\big|_{z=0,1} $, one can rewrite \eqref{bc:abstract} as
		$$
		\dz ( e^{A_\mathcal{F}(z)} \mathcal F)\big|_{z=0,1} = \alpha_\mathcal{F}^b e^{A_\mathcal{F}(z)} \mathcal F^b \big|_{z=0,1},
		$$
		where
		$$
		A_\mathcal{F}(z) := - \dfrac{\alpha_\mathcal{F}^b\big|_{z=0}}{2} (1-z)^2 +  \dfrac{\alpha_\mathcal{F}^b\big|_{z=1}}{2} z^2.  
		$$
		In addition, there exists $ \psi_\mathcal{F}:= \Psi( \alpha_\mathcal{F}^b e^{A_\mathcal{F}(z)} \mathcal F^b \big|_{z=0,1} ) $ (see extension Lemma \ref{lm:extension} in Appendix), such that $ \dz \psi_\mathcal{F}\big|_{z=0,1} = \alpha_\mathcal{F}^b e^{A_\mathcal{F}(z)} \mathcal F^b \big|_{z=0,1} $, and for any $ s \in [0,\infty) $, 
		\begin{equation}\label{est:trace-ori}
		\norm{\psi_\mathcal{F}}{\Hnorm{s+3/2}} \leq C_{s} \norm{\mathcal{F}^b}{\bHnorm{s}} .
		\end{equation}
		Here $ \Gamma := 2\mathbb T^2 \times \lbrace z = 0,1\rbrace = \partial \Omega $ is the boundary of the domain $ \Omega $.
		With such notations, 
one can define
\begin{equation}\label{def:homo-variable}
		\mathfrak T := B_\mathcal{T} \mathcal{T} - \psi_\mathcal{T}, ~~
		\mathfrak{q}_{\mrm{j}} := B_{\mrm{j}} q_\mrm{j} - \psi_{\mrm{j}}, ~~~~ \mrm{j} \in \lbrace \mrm{v}, \mrm{c}, \mrm{r} \rbrace,
\end{equation}
with
\begin{equation}\label{def:b-psi}
\begin{gathered}
	B_\mathcal{T} :=   e^{A_\mathcal{T}(z)}, ~~~~ A_\mathcal{T}(z) :=  - \dfrac{\alpha_\mathcal{T}^b\big|_{z=0}}{2} (1-z)^2 +  \dfrac{\alpha_\mathcal{T}^b\big|_{z=1}}{2} z^2, \\
	\psi_\mathcal T :=  \Psi(\alpha_\mathcal{T}^b e^{A_\mathcal{T}(z)}\mathcal{T}^b\big|_{z=0,1}) ,   \\
	B_\mrm{j} :=  e^{A_{\mrm{j}}(z)}, ~~~~ A_{\mrm{j}}(z) :=  - \dfrac{\alpha_\mrm{j}^b\big|_{z=0}}{2} (1-z)^2 +  \dfrac{\alpha_\mrm{j}^b\big|_{z=1}}{2} z^2, \\
	\psi_\mrm{j} :=  \Psi(\alpha_{\mrm{j}}^b e^{A_{\mrm{j}}(z)}q_\mrm{j}^b\big|_{z=0,1}), ~~~~ \mrm{j} \in \lbrace  \mrm{v}, \mrm{c}, \mrm{r} \rbrace.
\end{gathered}
\end{equation}
Notice that, $ B_\mrm{j}, \psi_\mrm{j} $, $ \mrm{j} \in \lbrace \mathcal{T}, \mrm{v}, \mrm{c}, \mrm{r} \rbrace $, are independent of the solutions. To simplify the presentation, we assume that $ \lbrace q_\mrm{j}^b\big|_{z=0,1} \rbrace_{\mrm{j}\in \moistidx} $ are given smooth functions, i.e., $ C^\infty $ in space and time, so that $ \lbrace B_\mrm{j}, \psi_\mrm{j} \rbrace\big|_{z=0,1} $ are smooth.

Then $ \mathfrak{T}, \mathfrak{q}_\mrm{j} $, $ \mrm{j} \in \lbrace \mrm{v}, \mrm{c}, \mrm{r} \rbrace $, satisfy, after substituting \eqref{eq:temperature-app} -- \eqref{eq:rain-ratio-app} and \eqref{bc:temperature} -- \eqref{bc:moisture}, the following system of equations and boundary conditions:
\begin{gather}
	\begin{gathered} 
		Q_\mrm{th}^+ \dt \mathfrak T - \kappa \Delta \mathfrak T = - Q_\mrm{th}^+ \vec{u} \cdot\nabla \mathfrak T - c_\mrm{l}B_\mrm{r}^{-1} \Vr (\mathfrak q_\mrm{r} + \psi_\mrm{r}) \dz \mathfrak T \\
		- 2 \kappa \dz \log B_\T \dz \mathfrak T 
		+ Q_\mrm{th}^+ \mathfrak T w \dz \log B_\T + c_\mrm{l} B_\mrm{r}^{-1}\Vr (\mathfrak q_\mrm{r} + \psi_\mrm{r}) \mathfrak T \dz \log B_\T \\ + \kappa \partial_{zz} B_\T^{-1} B_\T \mathfrak T  
		+ Q_\mrm{cp} \mathfrak T \dv \vec{u} - Q_1 \mathfrak T (\Sev^+ - \Scd^+ ) \\
		- Q_\mrm{th}^+ \vec{u} \cdot\nabla \psi_\T  
		- c_\mrm{l}B_\mrm{r}^{-1} \Vr (\mathfrak q_\mrm{r} + \psi_\mrm{r}) \dz \psi_\T
		+ Q_\mrm{th}^+ \psi_\T w \dz \log B_\T \\
		+ c_\mrm{l} B_\mrm{r}^{-1}\Vr (\mathfrak q_\mrm{r}+ \psi_\mrm{r}) \psi_\T \dz \log B_\T
		 - Q_\mrm{th}^+ \dt \psi_\T + \kappa \Delta \psi_\T \\
		 - 2 \kappa \dz \log B_\T \dz \psi_\T + \kappa \partial_{zz} B_\T^{-1} B_\T \psi_\T + Q_\mrm{cp} \psi_\T \dv \vec{u} \\ - (Q_1 \psi_\T + Q_2 B_\T  ) (\Sev^+ - \Scd^+ ),
	\end{gathered} \label{eq:temperature-app-1} \\
	\begin{gathered}
	\dt \mathfrak q_\mrm{v} - \Delta \mathfrak q_\mrm{v} = - \vec{u} \cdot \nabla \mathfrak q_\mrm{v}  - 2 \dz \log B_\mrm{v} \dz \mathfrak q_\mrm{v} 
	+ \mathfrak q_\mrm{v} w \dz \log B_\mrm{v} \\
	 + \partial_{zz} B_\mrm{v}^{-1} B_\mrm{v} \mathfrak q_\mrm{v}  
	+ ( \Sev^+ - \Scd^+) B_\mrm{v}
	- \vec{u} \cdot \nabla \psi_\mrm{v} \\ + \psi_\mrm{v} w \dz \log B_\mrm{v}
	- \dt \psi_\mrm{v} + \Delta \psi_\mrm{v} - 2 \dz \log B_\mrm{v} \dz \psi_\mrm{v} \\
	  + \partial_{zz} B_\mrm{v}^{-1} B_\mrm{v} \psi_\mrm{v},
	\end{gathered}	\label{eq:vapor-ratio-app-1} \\
		\begin{gathered}
		\dt \mathfrak q_\mrm{c} - \Delta \mathfrak q_\mrm{c} = - \vec{u} \cdot \nabla \mathfrak q_\mrm{c}  - 2 \dz \log B_\mrm{c} \dz \mathfrak q_\mrm{c} 
		+ \mathfrak q_\mrm{c} w \dz \log B_\mrm{c} \\
		+ \partial_{zz} B_\mrm{c}^{-1} B_\mrm{c} \mathfrak q_\mrm{c}  
		+ (\Scd^+ - \Sac^+ - \Scr^+) B_\mrm{c}
		- \vec{u} \cdot \nabla \psi_\mrm{c} \\ + \psi_\mrm{c} w \dz \log B_\mrm{c}
		- \dt \psi_\mrm{c} + \Delta \psi_\mrm{c} - 2 \dz \log B_\mrm{c} \dz \psi_\mrm{c} \\
		+ \partial_{zz} B_\mrm{c}^{-1} B_\mrm{c} \psi_\mrm{c},
		\end{gathered} \label{eq:cloud-ratio-app-1}\\
	\begin{gathered}
	\dt \mathfrak q_\mrm{r} - \Delta \mathfrak q_\mrm{r} = - \vec{u} \cdot \nabla \mathfrak q_\mrm{r}  + \Vr \dz \mathfrak q_\mrm{r} - 2 \dz \log B_\mrm{r} \dz \mathfrak q_\mrm{r} 
	+ \mathfrak q_\mrm{r} w \dz \log B_\mrm{r} \\
	+ \partial_{zz} B_\mrm{r}^{-1} B_\mrm{r} \mathfrak q_\mrm{r} 
	+ \mathfrak{q}_{r} ( \dz \Vr +\Vr \dz \log \rhod - \Vr \dz \log B_\mrm{r} ) \\
	+ (\Sac^+ + \Scr^+ - \Sev^+) B_\mrm{r}
	- \vec{u} \cdot \nabla \psi_\mrm{r} + \Vr \dz \psi_\mrm{r} \\ + \psi_\mrm{r} w \dz \log B_\mrm{r}
	- \dt \psi_\mrm{r} + \Delta \psi_\mrm{r} - 2 \dz \log B_\mrm{r} \dz \psi_\mrm{r} \\
	+ \partial_{zz} B_\mrm{r}^{-1} B_\mrm{r} \psi_\mrm{r} + \psi_\mrm{r} ( \dz \Vr +\Vr \dz \log \rhod - \Vr \dz \log B_\mrm{r} ),
	\end{gathered} \label{eq:rain-ratio-app-1}
\end{gather}
with
\begin{equation}\label{bc:app-hom}
	\dz \mathfrak T\big|_{z=0,1} = \dz \mathfrak q_\mrm{j}\big|_{z=0,1} = 0, ~~~~ \mrm{j} \in \lbrace \mrm{v}, \mrm{c}, \mrm{r} \rbrace.
\end{equation}

In the next section, we will study a linear inhomogeneous system of equations associated with \eqref{eq:dry-air-mass-app}, \eqref{eq:momentum-app}, and \eqref{eq:temperature-app-1} -- \eqref{eq:rain-ratio-app-1}, with homogeneous boundary conditions \eqref{bc:momentum} and \eqref{bc:app-hom}. Our strategy is to construct the solutions to the nonlinear problem by a fixed point argument. 

\section{The associated linear system and the corresponding {contraction} mapping}\label{sec:linear_contract}

Let $ (\vec{u}^o , \mathfrak T^o, \lbrace\mathfrak q_\mrm{j}^o\rbrace_{\mrm{j} \in \lbrace \mrm{v}, \mrm{c}, \mrm{r} \rbrace}) $ be an element of $ \mathfrak{Y} $, defined in \eqref{def:compact-subset}, below. We now introduce the associated linear approximations to system \eqref{eq:dry-air-mass-app}, \eqref{eq:momentum-app}, \eqref{eq:temperature-app-1} -- \eqref{eq:rain-ratio-app-1}. 
Let $ \rhod $ be the solution of
\begin{equation}\label{eq:dry-air-mass-lin}
	\dt \rhod + \dv ( \rhod \vec{u}^o ) = 0,
\end{equation}
which can be solved by the standard characteristic method.
Consider the following linear system of  equations for $ \vec{u}, \mathfrak T, \mathfrak q_\mrm{j} $, $ \mrm{j} \in \lbrace \mrm{v}, \mrm{c}, \mrm{r} \rbrace $:
\begin{gather}
	\rhod Q_\mathrm{m}^{+,o} \dt \vec{u} - \mu \Delta \vec{u} - (\mu + \lambda) \nabla \dv \vec{u}  = - \nabla p^o +  \mathcal I_{\vec{u},1} + \mathcal{I}_{\vec{u},2}, \label{eq:momentum-lin} \\
		 Q_\mathrm{th}^{+,o} \dt \mathfrak T  - \kappa \Delta \mathfrak T = \mathcal I_{\T,1} + \mathcal I_{\T,2}, 
		 \label{eq:temperature-lin} \\
	  \dt \mathfrak q_\mrm{v} - \Delta\mathfrak q_\mrm{v} = \mathcal I_{\mrm{v},1} + \mathcal I_{\mrm{v},2}, \label{eq:vapor-ratio-lin} \\
	  \dt \mathfrak q_\mrm{c} - \Delta\mathfrak q_\mrm{c} = \mathcal I_{\mrm{c},1} + \mathcal I_{\mrm{c},2}, \label{eq:cloud-ratio-lin} \\
	  \dt \mathfrak q_\mrm{r} - \Delta\mathfrak q_\mrm{r} = \mathcal I_{\mrm{r},1} + \mathcal I_{\mrm{r},2},  \label{eq:rain-ratio-lin}
\end{gather}
with homogeneous boundary conditions
\begin{equation}\label{bc:app-lin}
		\begin{gathered}
		\dz v\big|_{z= 0 ,1} = 0, ~ w \big|_{z=0,1} = 0, 	\\
		\dz \mathfrak T\big|_{z=0,1} = \dz \mathfrak q_\mrm{j}\big|_{z=0,1} = 0, ~~~~ \mrm{j} \in \lbrace \mrm{v}, \mrm{c}, \mrm{r} \rbrace,
		\end{gathered}
\end{equation}
where
\begin{align}
	& \begin{aligned}
	\mathcal{I}_{\vec{u}, 1} := & - \rhod Q_\mrm{m}^{+,o} (\vec{u}^o \cdot \nabla) \vec{u}^o + \rhod B_\mrm{r}^{-1} (\mathfrak q_\mrm{r}^o + \psi_\mrm{r}) \Vr \dz \vec{u}^o ,  \\
	\mathcal{I}_{\vec{u}, 2} := &  
	- \rhod Q_\mathrm{m}^{+,o} g \vec{e}_3 , 
	\end{aligned} \label{def:momentum-source-lin} \\
	& \begin{aligned}
		\mathcal{I}_{\T, 1} := & - Q_\mrm{th}^{+,o} \vec{u}^o \cdot\nabla \mathfrak T^o - c_\mrm{l}B_\mrm{r}^{-1} \Vr (\mathfrak q_\mrm{r}^o + \psi_\mrm{r}) \dz \mathfrak T^o 
		- 2 \kappa \dz \log B_\T \dz \mathfrak T^o \\
		&  + Q_\mrm{cp}^o \mathfrak T^o \dv \vec{u}^o + Q_\mrm{cp}^o \psi_\T \dv \vec{u}^o, \\
		\mathcal{I}_{\T, 2} := & 
		\mathcal I_{\T, 2}' 
		 - Q_1 \mathfrak T^o (\Sev^{+,o} - \Scd^{+,o} ) - (Q_1 \psi_\T + Q_2 B_\T  ) (\Sev^{+,o} - \Scd^{+,o} ),
	\end{aligned} \label{def:temperature-source-lin} \\
	& \begin{aligned}
		\mathcal{I}_{\mrm{v}, 1} := & - \vec{u}^o \cdot \nabla \mathfrak q_\mrm{v}^o  - 2 \dz \log B_\mrm{v} \dz \mathfrak q_\mrm{v}^o, \\
		\mathcal{I}_{\mrm{v}, 2} := &  \mathcal I_{\mrm v, 2}' 
		+ ( \Sev^{+,o} - \Scd^{+,o}) B_\mrm{v}
		,
	\end{aligned} \label{def:vapor-source-lin} \\
	& \begin{aligned}
		\mathcal{I}_{\mrm{c}, 1} := &  - \vec{u}^o \cdot \nabla \mathfrak q_\mrm{c}^o  - 2 \dz \log B_\mrm{c} \dz \mathfrak q_\mrm{c}^o , \\
		\mathcal{I}_{\mrm{c}, 2} := &  \mathcal I_{\mrm c, 2}' 
		+ (\Scd^{+,o} - \Sac^{+,o} - \Scr^{+,o}) B_\mrm{c},
	\end{aligned} \label{def:cloud-source-lin} \\
	& \begin{aligned}
		\mathcal{I}_{\mrm{r}, 1} := & - \vec{u}^o \cdot \nabla \mathfrak q_\mrm{r}^o  + \Vr \dz \mathfrak q_\mrm{r}^o - 2 \dz \log B_\mrm{r} \dz \mathfrak q_\mrm{r}^o , \\
		\mathcal{I}_{\mrm{r}, 2} := & \mathcal I_{\mrm r, 2}'
		+ (\mathfrak{q}_{r}^o + \psi_\mrm{r}) ( \dz \Vr +\Vr \dz \log \rhod - \Vr \dz \log B_\mrm{r} ) \\&
		+ \Vr \dz \psi_\mrm{r} + (\Sac^{+,o} + \Scr^{+,o} - \Sev^{+,o}) B_\mrm{r},
	\end{aligned} \label{def:rain-source-lin} 
\end{align}
and, for $ \mrm{j} \in \moistidx $, 
\begin{align*}
&\begin{aligned}
 \mathcal I_{\T, 2}' :=& 
Q_\mrm{th}^{+,o} (\mathfrak T^o + \psi_\T) w^o \dz \log B_\T + \kappa \partial_{zz} B_\T^{-1} B_\T ( \mathfrak T^o + \psi_\T)  \\
&  + c_\mrm{l} B_\mrm{r}^{-1}\Vr (\mathfrak q_\mrm{r}^o + \psi_\mrm{r}) (\mathfrak T^o + \psi_\T) \dz \log B_\T \\ & 
- Q_\mrm{th}^{+,o} \vec{u}^o \cdot\nabla \psi_\T  
- c_\mrm{l}B_\mrm{r}^{-1} \Vr (\mathfrak q_\mrm{r}^o + \psi_\mrm{r}) \dz \psi_\T \\&
- Q_\mrm{th}^{+,o} \dt \psi_\T + \kappa \Delta \psi_\T  
- 2 \kappa \dz \log B_\T \dz \psi_\T ,
\end{aligned} \\
	&\begin{aligned}
	& \mathcal I_{\mrm{j},2}' := 
	(\mathfrak q_\mrm{j}^o + \psi_\mrm{j}) w^o \dz \log B_\mrm{j} 
	+ \partial_{zz} B_\mrm{j}^{-1} B_\mrm{j} (\mathfrak q_\mrm{j}^o + \psi_\mrm{j} ) \\& ~~~~
	- \vec{u}^o \cdot \nabla \psi_\mrm{j}  
	- \dt \psi_\mrm{j} + \Delta \psi_\mrm{j} - 2 \dz \log B_\mrm{j} \dz \psi_\mrm{j}.
	\end{aligned}
\end{align*}
Here,
\begin{equation}\label{def:state-variable-lin}
	\begin{aligned}
	Q_\mathrm{m}^{o,+} := & 1 + B_\mrm{v}^{-1} (\mathfrak q_\mrm{v}^o + \psi_\mrm{v})^+  + B_\mrm{c}^{-1} (\mathfrak q_\mrm{c}^o + \psi_\mrm{c})^+ + B_\mrm{r}^{-1} (\mathfrak q_\mrm{r}^o + \psi_\mrm{r})^+, \\
	Q_\mathrm{th}^{o,+} := & \dfrac{c_\mathrm{pd}}{\gamma} + \biggl( \dfrac{c_\mathrm{pv}}{\gamma} + \dfrac{c_\mathrm{pv}}{c_\mathrm{pd}}R_\mathrm{d} - R_\mathrm{v} \biggr) B_\mrm{v}^{-1} (\mathfrak q_\mrm{v}^o + \psi_\mrm{v})^+ \\
	& ~~~~ + \biggl( \dfrac{c_\mathrm{l}}{\gamma} + \dfrac{c_\mathrm{l}}{c_\mathrm{pd}}R_\mathrm{d} \biggr)(B_\mrm{c}^{-1} (\mathfrak q_\mrm{c}^o + \psi_\mrm{c})^+ + B_\mrm{r}^{-1} (\mathfrak q_\mrm{r}^o + \psi_\mrm{r})^+ ), \\
		Q_\mathrm{cp}^{o} := &  - R_\mathrm{d} - R_\mathrm{v} B_\mrm{v}^{-1} (\mathfrak q_\mrm{v}^o + \psi_\mrm{v}), \\
		p^o := & \rhod (R_\mrm{d} + R_\mrm{v} B_\mrm{v}^{-1} (\mathfrak q_\mrm{v}^o + \psi_\mrm{v}) ) B_\T^{-1} (\mathfrak T^o + \psi_\T), \\
	\Sev^{+,o}  := &  c_\mathrm{ev} B_\T^{-1} (\mathfrak T^o + \psi_\T)^+ ( q_\mathrm{vs}^o - B_\mrm{v}^{-1} (\mathfrak q_\mrm{v}^o + \psi_\mrm{v})^+) ^+ B_\mrm{r}^{-1} (\mathfrak q_\mrm{r}^o + \psi_\mrm{r})^+ \\
	&  \cdot \dfrac{(R_\mathrm{d}+R_\mathrm{v} B_\mrm{v}^{-1} (\mathfrak q_\mrm{v}^o + \psi_\mrm{v})^+)}{(1+B_\mrm{v}^{-1} (\mathfrak q_\mrm{v}^o + \psi_\mrm{v})^+ + B_\mrm{c}^{-1} (\mathfrak q_\mrm{c}^o + \psi_\mrm{c})^+ + B_\mrm{r}^{-1} (\mathfrak q_\mrm{r}^o + \psi_\mrm{r})^+)}  , \\
	\Scd^{+,o} := & c_\mathrm{cd} ( B_\mrm{v}^{-1} (\mathfrak q_\mrm{v}^o + \psi_\mrm{v})^+ - q_\mathrm{vs}^o ) B_\mrm{c}^{-1} (\mathfrak q_\mrm{c}^o + \psi_\mrm{c})^+ \\
	& ~~~~ + c_\mathrm{cn} ( B_\mrm{v}^{-1} (\mathfrak q_\mrm{v}^o + \psi_\mrm{v}) - q_\mathrm{vs}^o )^+ q_\mathrm{cn}, \\
	\Sac^{+,o} := &  c_\mathrm{ac} ( B_\mrm{c}^{-1} (\mathfrak q_\mrm{c}^o + \psi_\mrm{c}) - q_\mathrm{ac})^+,  \\
	\Scr^{+,o} := &  c_\mathrm{cr} B_\mrm{c}^{-1} (\mathfrak q_\mrm{c}^o + \psi_\mrm{c})^+ B_\mrm{r}^{-1} (\mathfrak q_\mrm{r}^o + \psi_\mrm{r})^+, \\
	q_\mrm{vs}^o := & q_\mrm{vs}(p^o, B_\T^{-1} (\mathfrak T^o + \psi_\T)).
	\end{aligned}
\end{equation}


To introduce our contracting mapping scheme, for some finite time $ T^* \in (0,\infty) $, to be determined later, 
we set $$ \mathfrak M_{T^*} := L^\infty(0,T^*;L^2(\Omega))\cap L^2(0,T^*;H^1(\Omega)). $$
{The next steps are introduced to establish the local existence theory}:
Step 1 {is to show} well-posedness of the associated linear system of equations; Step 2 is then  to  derive bounds for the solutions in certain compact subspace $ \mathfrak X_{T^*} $ of $ \mathfrak M_{T^*} $, which consequently imply that the contraction mapping $ \mathcal M $ (see \eqref{def:mapping}, below) is well-defined; Step 3 {consists of proving the contraction property of $ \mathcal M $ in $ \mathfrak M_{T^*} $}.

Consider the initial data $$ (\rho_{\mrm d, \mrm{in}}, \mathfrak T_{\mrm{in}} ,  \lbrace \mathfrak q_{\mrm j, \mrm{in}}\rbrace_{\mrm j \in \moistidx}, \vec u_{\mrm{in}})\, , $$ for $ (\rhod, \mathfrak T, \lbrace \mathfrak q_{\mrm j}\rbrace_{\mrm j \in \moistidx}, \vec u) $, to be smooth enough as in Theorem \ref{thm}. Let $ c_\mrm{in,o} \in (0,\infty) $ be the bounds of initial data defined by 
\begin{equation}\label{def:initial-bound}
	\begin{gathered}
	\norm{\rho_{\mrm d, \mrm{in}}^{1/2}, \log \rho_{\mrm d, \mrm{in}} , \vec u_\mrm{in} , \mathfrak T_\mrm{in} , \lbrace \mathfrak q_{\mrm j, \mrm{in}} \rbrace_{\mrm j \in \moistidx} }{\Hnorm{2}}^2 \leq \mathfrak c_\mrm{in,o}.
	\end{gathered}
\end{equation}
Notice, \eqref{def:initial-bound} implies that $ \rho_{\mrm d, \mrm{in}} $ is strictly positive, and thus from \eqref{eq:momentum-lin} -- \eqref{eq:rain-ratio-lin}, one can check that
\begin{equation} \label{def:initial-bound-2}
	\norm{\dt \vec u \big|_{t=0}, \dt \mathfrak T\big|_{t=0}, \lbrace \dt \mathfrak q_{\mrm j}\big|_{t=0} \rbrace_{\mrm j \in \moistidx}}{\Lnorm{2}}^2 < \mathcal N_{\mrm{in}}(\mathfrak c_{\mrm{in},o}) < \infty,
\end{equation} 
for some smooth function $ N_\mrm{in}(\cdot) $. Here,  $$ (\dt \vec u \big|_{t=0}, \dt \mathfrak T\big|_{t=0}, \lbrace \dt \mathfrak q_{\mrm j}\big|_{t=0} \rbrace_{\mrm j \in \moistidx})  $$
are the initial {data} for the time derivatives defined by equations \eqref{eq:momentum-app} and \eqref{eq:temperature-app-1} -- \eqref{eq:rain-ratio-app-1}. 
Let $ \mathfrak c_{\mrm{in}} = \max \lbrace \mathfrak c_{\mrm{in},o}, \mathcal N_{\mrm{in}}(\mathfrak c_{\mrm{in},o}) \rbrace $. 

We introduce the aforementioned compact (see justification, below) subspace $\mathfrak X_{T^*}$ of $  \mathfrak M_{T^*} $  for our solutions by:
\begin{equation}\label{def:compact-space}
\begin{gathered}
 \mathfrak X_{T^*} = \bigl\lbrace (\vec u, \mathfrak T, \lbrace \mathfrak q_\mrm j \rbrace_{\mrm j \in \moistidx})   \in L^\infty(0, T^*; H^2(\Omega))\cap L^2(0, T^*;H^3(\Omega ))| \\ 
 \dt \mathfrak T, \lbrace \dt \mathfrak q_\mrm j \rbrace_{\mrm j \in \moistidx} \in L^\infty(0, T^*; L^2(\Omega))\cap L^2(0, T^*;H^1(\Omega )),\\
  \dt  \vec u \in L^2 (0,T^*;H^1(\Omega)),\\
  (\vec u, \mathfrak T, \lbrace \mathfrak q_\mrm j \rbrace_{\mrm j \in \moistidx})\big|_{t=0} = (\vec u_\mrm{in}, \mathfrak T_\mrm{in}, \lbrace \mathfrak q_{\mrm j, \mrm{in}} \rbrace_{\mrm j \in \moistidx})
 \bigr\rbrace.
\end{gathered}
\end{equation}
In addition, for some constants, $ \mathfrak c_{u,1}, \mathfrak c_{u,2}, \mathfrak c_{u,3}, \mathfrak c_{o,1}, \mathfrak c_T, \mathfrak c_q, \mathfrak c_{o,2} $, to be determined later, consider the following inequalities: for $ t \in (0,T^*] $, 
\begin{gather}
	\sup_{0\leq s \leq t}\norm{\vec u(s)}{\Hnorm{1}}^2 + \int_0^t \norm{\dt \vec u(s)}{\Hnorm{1}}^2 \,dt \leq \mathfrak c_{u,1}, \label{unest:u-1}
	\\
	\sup_{0\leq s \leq t} \norm{\nabla^2 \vec u (s)}{\Lnorm{2}}^2 \leq \mathfrak c_{u,2}, \label{unest:u-2}
	\\
	\int_0^t \norm{\nabla^3 \vec u(s)}{\Lnorm{2}}^2 \leq \mathfrak c_{u,3}, \label{unest:u-3}
	\\
\begin{aligned}
	& \sup_{0\leq s \leq t} \bigl( \norm{\dt  \mathfrak T(s), \lbrace \dt \mathfrak q_\mrm{j}(s) \rbrace_{\mrm j \in \moistidx} }{\Lnorm{2}}^2   + \norm{\mathfrak T(s), \lbrace \mathfrak q_\mrm{j}(s)\rbrace_{\mrm j \in \moistidx}}{\Hnorm{1}}^2 \bigr)\\
	& ~~~~ ~~~~ + \int_0^t \norm{\dt  \mathfrak T(s), \lbrace \dt \mathfrak q_\mrm{j}(s) \rbrace_{\mrm j \in \moistidx} }{\Hnorm{1}}^2 \,ds \leq \mathfrak c_{o,1},
\end{aligned} \label{unest:o-1}	\\
	\sup_{0\leq s \leq t}\norm{\nabla^2 \mathfrak T}{\Lnorm{2}}^2 \leq \mathfrak c_{T}, \label{unest:T}
	\\
	\sup_{0\leq s \leq t}\norm{\lbrace\nabla^2 \mathfrak q_\mrm{j}\rbrace_{\mrm  j \in \moistidx} }{\Lnorm{2}}^2 \leq \mathfrak c_{q},   \label{unest:q}
	\\
	\int_0^t \norm{\nabla^3 \mathfrak T, \lbrace \nabla^3 \mathfrak q_\mrm{j} \rbrace_{\mrm j \in \moistidx}}{\Lnorm{2}}^2\leq \mathfrak c_{o,2}. \label{unest:o-2}
\end{gather}

Also, let $ \mathfrak Y $ be the bounded subset of $ \mathfrak X_{T^*} $, admitting \eqref{unest:u-1} -- \eqref{unest:o-2}, i.e.,
\begin{equation}\label{def:compact-subset}
\begin{gathered}
\mathfrak Y := \bigl\lbrace (\vec u, \mathfrak T, \lbrace \mathfrak q_\mrm j \rbrace_{\mrm j \in \moistidx})   \in \mathfrak X_{T^*}| ~ \text{Inequalities \eqref{unest:u-1} -- \eqref{unest:o-2} hold.} \bigr\rbrace.
\end{gathered}
\end{equation}
We remark that $ \mathfrak Y $ is a subset of $ \mathfrak X_{T^*} $, and thanks to the Aubin-Lions Lemma (e.g., \cite[Theorem 2.1]{temam1977} and \cite{Simon1986}), $ \mathfrak X_{T^*} $ is compactly embedded in $ \mathfrak M_{T^*} $, i.e., 
$$
\mathfrak Y \subset \mathfrak X_{T^*} \subset\subset \mathfrak M_{T^*} . 
$$

\subsection{Well-posedness of the associated linear system \eqref{eq:momentum-lin} -- \eqref{eq:rain-ratio-lin}}

By assuming $ \vec u^o, \mathfrak T^o $, and $ \lbrace \mathfrak q_\mrm j^o \rbrace_{\mrm j \in \moistidx} $ to be smooth enough, \eqref{eq:dry-air-mass-lin} can be solved by the standard characteristic method, and the well-posedness of \eqref{eq:momentum-lin} -- \eqref{eq:rain-ratio-lin} follows from a standard Galerkin method. Such a Galerkin scheme is similar to the one employed in \cite{LT2021-CPE}.

To be more precise, let $ \lbrace \Psi_\mrm j \rbrace_{\mrm j = 1,2,3,\cdots} $ and $ \lbrace \Phi_\mrm j\rbrace_{\mrm j = 1,2,3,\cdots} $ be the eigenfunctions of the Laplacian subject to the Dirichlet and the Neumann boundary conditions, respectively, i.e., for $ \mrm j = 1,2,3,\cdots $, 
$$
\begin{aligned}
	- \Delta \Psi_\mrm j & = \sigma_j \Psi_\mrm j, \qquad & \Psi_\mrm j\vert_{z=0,1} & = 0, \\
	- \Delta \Phi_\mrm j & = \lambda_j \Phi_\mrm j, \qquad & \partial_z \Phi_\mrm j\vert_{z=0,1} & = 0, \qquad \int \Phi_\mrm j \idx = 1,
\end{aligned}
$$
where $ \lambda_\mrm j$ and $ \sigma_\mrm j $ are the corresponding eigenvalues, labeled in the increasing order. Then the Galerkin approximation scheme can be described as follows: Let $ K = 1,2,3,\cdots $ be an arbitrary finite integer. Let $ v = v^K, \mathfrak T = \mathfrak T^K, \lbrace \mathfrak q_\mrm j = \mathfrak q_\mrm j^K \rbrace_{\mrm j \in \moistidx} $ be in the finite dimensional vector space spanned by $ \lbrace \Phi_j, \mrm j = 1,2,3,\cdots, K \rbrace $ and $ w = w^K $ be in the finite dimensional vector space spanned by $ \lbrace \Psi_\mrm j, \mrm j = 1,2,3,\cdots, K \rbrace $. Then by restricting \eqref{eq:momentum-lin}--\eqref{eq:rain-ratio-lin} in the corresponding finite dimensional space, it forms a system of linear, non-degenerate ordinary differential equations (ODEs) with source terms. Then the existence of local-in-time solutions for this system of ODEs follows from the standard existence theory. Moreover, the standard $ L^2 $-estimates implies the uniform-in-$ K $ bounds of $ v^K, w^K, \mathfrak T^K $, and $ \lbrace \mathfrak q_\mrm j^K \rbrace_{\mrm j \in \moistidx} $. After sending $ K \rightarrow \infty $, one can conclude the existence of solutions to \eqref{eq:momentum-lin} -- \eqref{eq:rain-ratio-lin}. The details are omitted here.



\subsection{Definition of the contract mapping}
In this subsection, we aim at defining the contract mapping which maps $ (\vec u^o, \mathfrak T^o, \lbrace \mathfrak q_\mrm j^o \rbrace_{\mrm j \in \moistidx}) $ to the solution of \eqref{eq:dry-air-mass-lin}--\eqref{eq:rain-ratio-lin}. This is done by first 
establishing the estimates \eqref{unest:u-1} -- \eqref{unest:o-2}, below. We start with the estimates of $ \rhod $. 

\subsubsection*{The $ \rhod $-estimates} 
One can write, following \eqref{eq:dry-air-mass-lin}, 
\begin{gather}
	\dt \rhod^{1/2} + \vec u^o \cdot \nabla \rhod^{1/2} + \dfrac{1}{2} \rhod^{1/2} \dv \vec u^o = 0, \label{eq:half-density-lin} \\
	\dt \log \rhod + \vec u^o \cdot \nabla \log \rhod + \dv \vec{u}^o = 0. \label{eq:log-density-lin}
\end{gather}
Applying standard $ H^2 $-estimates yields that,
\begin{gather*}
	\dfrac{d}{dt} \norm{ \rhod^{1/2}}{\Hnorm{2}}^2 \lesssim \norm{\nabla \vec{u}^o}{\Hnorm{2}} \norm{\rhod^{1/2}}{\Hnorm{2}}^2, \\
	\dfrac{d}{dt} \norm{\log \rhod}{\Hnorm{2}}^2 \lesssim  \norm{\nabla \vec{u}^o}{\Hnorm{2}} (\norm{\log \rhod}{\Hnorm{2}}^2 + 1 ).
\end{gather*}
Then, applying Gr\"onwall's inequality to the above inequalities leads to the estimates: 
\begin{equation}\label{linest:d-1}
	\begin{split}
	&\sup_{0\leq s\leq  t} \norm{\rhod^{1/2}(s), \log \rhod(s) }{\Hnorm{2}}^2\\
 &\quad {\leq c_{d,1} e^{c_{d,1} \int_0^t \norm{\nabla \vec u^o(s)}{\Hnorm{2}} \,ds } 
	 \Bigl(\int_0^t (\norm{\nabla \vec u^o(s)}{\Hnorm{2}} + 1) \,ds +C_{id} \Bigr),}
	\end{split}
\end{equation}
for some positive constant $ c_{d,1} \in (0,\infty)$, {where $C_{id}$ is an upper bound for the terms involving the initial data}.
Meanwhile, directly from \eqref{eq:half-density-lin} and \eqref{eq:log-density-lin}, one can derive
\begin{equation}\label{linest:d-2}
	\norm{\dt \rhod^{1/2}, \dt \log \rhod}{\Hnorm{1}}^2 \leq  c_{d,2}\norm{\vec{u}^o}{\Hnorm{2}}^2 ( \norm{\rhod^{1/2},\log \rhod}{\Hnorm{2}}^2 + 1),
\end{equation}
for some positive constant $ c_{d,2} \in (0,\infty) $. 

\subsubsection*{The $ \vec{u} $-estimates}

Taking the $ L^2 $-inner product of \eqref{eq:momentum-lin} with $ 2 \dt \vec{u} $ leads to, after applying integration by parts in the resultant, 
\begin{equation}\label{linest:u-H1}
	\begin{aligned}
		& \dfrac{d}{dt} \biggl( \mu \norm{\nabla \vec{u}}{\Lnorm{2}}^2 + (\mu +\lambda) \norm{\dv \vec{u}}{\Lnorm{2}}^2 \biggr) + 2 \norm{\rhod^{1/2}(Q_\mrm{m}^{+,o})^{1/2}\dt \vec u}{\Lnorm{2}}^2 \\
		& ~~~~ = - 2 \int \nabla p^o \cdot \dt \vec{u} \idx + 2 \int \rhod^{-1/2} \mathcal I_{\vec{u},1} \cdot \rhod^{1/2} \dt \vec{u} \idx \\ & ~~~~ + 2 \int \rhod^{-1/2}\mathcal I_{\vec{u},2} \cdot \rhod^{1/2} \dt \vec{u} \idx \\
 & ~~~~ {
		 \leq \norm{\rhod^{1/2} \dt \vec{u}}{\Lnorm{2}}  \norm{\rhod^{-1/2}\nabla p^o,\rhod^{-1/2}\mathcal I_{\vec{u},1},\rhod^{-1/2}\mathcal I_{\vec{u},2}}{\Lnorm{2}}}\\
& ~~~~{ \lesssim  \norm{\rhod^{1/2} \dt \vec{u}}{\Lnorm{2}}
		  \times \mathcal N(\norm{\rhod, \moistvar, \mathfrak T^o}{\Lnorm{\infty}} ) }\\
& ~~~~{\times \bigl( \norm{\nabla \rhod^{1/2}, \nabla \mathfrak q_\mrm{v}^o, \nabla \mathfrak T^o}{\Lnorm{2}}  + \norm{\vec u^o}{\Hnorm{1}} \norm{\nabla \vec u^o}{\Hnorm{1}} + \norm{\vec u^o}{\Hnorm{1}} + 1 \bigr),}
	\end{aligned}
\end{equation}
where we have substituted inequalities \eqref{ap-001} and \eqref{ap-002} in Appendix. 

On the other hand, after applying $ \dt $ to \eqref{eq:momentum-lin}, we arrive at
\begin{equation}\label{eq:dt-u-lin}
\begin{gathered}
\rhod Q_\mrm{m}^{+,o} \dt^2 \vec{u} - \mu \Delta\dt \vec{u} - (\mu+\lambda) \nabla \dv \dt \vec{u} \\
= - \dt (\rhod Q_\mrm{m}^{+,o}) \dt \vec{u} - \nabla\dt p^o + \dt \mathcal I_{\vec{u},1} + \dt \mathcal I_{\vec{u},2}.
\end{gathered}
\end{equation}
Taking the $ L^2 $-inner product of \eqref{eq:dt-u-lin} with $ 2 \dt \vec{u} $ leads to, after applying integration by parts, H\"older's inequality, and the interpolation inequality in the resultant, 
\begin{equation}\label{linest:dtu-L2}
	\begin{aligned}
	& \dfrac{d}{dt} \norm{\rhod^{1/2}(Q_\mrm{m}^{+,o})^{1/2}\dt \vec{u}}{\Lnorm{2}}^2 + 2\mu \norm{\nabla\dt \vec{u}}{\Lnorm{2}}^2 + 2(\mu+\lambda)\norm{\dv \dt \vec{u}}{\Lnorm{2}}^2 \\
	& = - \int \dt (\rhod Q_\mrm{m}^{+,o}) |\dt \vec{u}|^2 \idx + 2 \int \dt p^o \dv \dt \vec{u} \idx  + 2 \int \dt \mathcal{I}_{\vec{u},1} \cdot\dt u \idx \\
	& ~~~~ + 2 \int \dt \mathcal I_{\vec{u},2} \cdot \dt \vec{u} \idx \\
& {\leq 2 \norm{\dt p^o}{\Lnorm{2}} \norm{\dv \dt \vec{u}}{\Lnorm{2}} }\\
	 & ~~~~  + \norm{\rhod^{-1}\dt(\rhod Q_\mrm{m}^{+,o})}{\Lnorm{5}} \norm{\rhod^{1/2}\dt \vec{u}}{\Lnorm{2}}^{7/5} \norm{\rhod^{1/2}\dt \vec{u}}{\Lnorm{6}}^{3/5} \\
	& ~~~~ + 2 \norm{\rhod^{-1/2} \dt \mathcal I_{\vec{u},1}, \rhod^{-1/2} \dt \mathcal I_{\vec u, 2} }{\Lnorm{3/2}} \norm{ \rhod^{1/2} \dt \vec{u}}{\Lnorm{2}}^{1/2} \norm{\rhod^{1/2} \dt \vec{u}}{\Lnorm{6}}^{1/2}\\
	& \leq \mu \norm{\nabla \dt \vec u}{\Lnorm{2}}^2 +  \mathcal N (\norm{\rhod, \moistvar, \mathfrak T^o}{\Lnorm{\infty}}, \norm{\vec u^o}{\Hnorm{2}}, C_\varepsilon )\\
	& ~~~~ \times \biggl( \norm{\dt \rhod^{1/2}, \dt \mathfrak q_\mrm{v}^o, \dt \mathfrak T^o}{\Lnorm{2}}^2 
	 +(\norm{\dt \log \rhod, \lbrace \dt \mathfrak q_\mrm{j}^o \rbrace_{\mrm{j}\in \moistidx}}{\Lnorm{5}}^{10/7}+ 1)\\
	 & ~~~~ ~~~~ \times \norm{\rhod^{1/2}\dt \vec u}{\Lnorm{2}}^2 \biggr) + \varepsilon \norm{\dt \vec u^o }{\Hnorm{1}}^2 ,
	\end{aligned}
\end{equation}
{for any $\varepsilon \in (0,1)$,} where we have substituted inequalities \eqref{ap-001}, \eqref{ap-003}, and \eqref{ap-004} in Appendix, and applied Young's inequality, the Sobolev embedding inequalities, and the following inequalities (see, \cite[Lemma 3.2]{Feireisl2004}):
\begin{align*}
	& \norm{\rhod^{1/2}\dt \vec{j}}{\Lnorm{6}} \lesssim \norm{\rhod}{\Lnorm{\infty}}^{1/2} \norm{\dt \vec{j}}{\Hnorm{1}} \\
	& ~~~~ \lesssim \norm{\rhod}{\Lnorm{\infty}}^{1/2} (\norm{\rhod^{1/2}\dt \vec{j}}{\Lnorm{2}} + \norm{\nabla \dt \vec{j}}{\Lnorm{2}}), \\
	& \norm{\dt \vec{j}}{\Lnorm{6}} \lesssim \norm{\rhod^{1/2}\dt \vec{j}}{\Lnorm{2}} + \norm{\nabla \dt \vec{j}}{\Lnorm{2}}, ~~~~ \vec j \in \lbrace \vec u, \vec{u}^o \rbrace.
\end{align*}

Noticing that $ Q_\mrm{m}^{+,o} \geq 1 $, from \eqref{linest:u-H1} and \eqref{linest:dtu-L2}, after applying Gr\"onwall's inequality, one can conclude that,
\begin{equation}\label{linest:u-parabolic}
	\begin{aligned}
	& \sup_{0\leq s \leq t} \norm{\nabla \vec{u}(s), \rhod^{1/2} \dt \vec{u}(s)}{\Lnorm{2}}^2 + \int_0^t \norm{\rhod^{1/2} \dt \vec{u}(s), \nabla\dt \vec{u}(s) }{\Lnorm{2}}^2 \,ds \\
	& ~~~~ \leq c_{u,1} e^{c_{u,1}\int_0^t H_{u,1}(s) \,ds } \biggl( \int_0^t G_{u,1}(s) \,ds + \initial \biggr),
	\end{aligned}
\end{equation}
for some positive constant $ c_{u,1} \in (0,\infty) $, where
\begin{align}
	& \begin{aligned}
	& H_{u,1} := \mathcal N( \norm{\rhod, \moistvar, \mathfrak T^o}{\Lnorm{\infty}}, \norm{\vec u^o}{\Hnorm{2}}, C_\varepsilon )  \\
	& ~~~~ \times \bigl( 1 + \norm{\dt \log \rhod, \lbrace \dt \mathfrak q_\mrm{j}^o \rbrace_{\mrm{j}\in \moistidx}}{\Lnorm{5}}^{10/7} \bigr) ~~~~ \text{and}
	\end{aligned} \label{linsource:001}  \\
	& \begin{aligned}
	& G_{u,1} := \varepsilon \norm{\dt \vec u^o}{\Hnorm{1}}^2 + \mathcal N( \norm{\rhod, \moistvar, \mathfrak T^o}{\Lnorm{\infty}}, \norm{\vec u^o}{\Hnorm{2}}, C_\varepsilon ) \\
	& ~~~~  \times \bigl( \norm{ \nabla\rhod^{1/2}, \nabla \mathfrak q_\mrm{v}^o,\nabla \mathfrak T^o,\dt \rhod^{1/2}, \dt \mathfrak q_\mrm{v}^o , \dt \mathfrak T^o}{\Lnorm{2}}^2  + \norm{\vec u^o}{\Hnorm{2}}^2 \bigr). 
	\end{aligned} \label{linsource:002}
\end{align}

On the other hand, the following estimate follows by applying integration by parts and H\"older's inequality:
\begin{gather*}
	 \mu \norm{\nablah^2\vec{u}}{\Lnorm{2}}^2 + \mu \norm{\nablah \dz \vec{u}}{\Lnorm{2}}^2 + (\mu + \lambda) \norm{\nablah \dv \vec{u}}{\Lnorm{2}}^2 \\
	 ~~ = \int (\mu \Delta \vec{u} + (\mu + \lambda)\nabla\dv\vec{u}) \cdot \Deltah \vec{u} \idx \\
	 ~~ \leq \norm{\mu \Delta \vec{u} + (\mu + \lambda)\nabla\dv\vec{u}}{\Lnorm{2}} \norm{\nablah^2 \vec{u}}{\Lnorm{2}}.
\end{gather*}
In addition, notice that,
\begin{gather*}
	\biggl(\begin{array}{c}
	\mu \partial_{zz} \vec{v} \\ (2\mu + \lambda) \partial_{zz} w
	\end{array}\biggr) = \mu \Delta \vec{u} + (\mu+\lambda) \nabla\dv \vec{u} \\
	 - \biggl(\begin{array}{c}
	\mu \Deltah \vec{v} + (\mu + \lambda) \nablah \dv \vec{u} \\ \mu \Deltah w + (\mu+\lambda) \dz \dv_h \vec v
	\end{array}\biggr).
\end{gather*}
Consequently, using the above identities and \eqref{eq:momentum-lin}, one has that
\begin{equation}\label{linest:u-H2}
	\begin{aligned}
	\norm{\nabla^2 \vec{u}}{\Lnorm{2}} \lesssim & \norm{\mu \Delta \vec{u} + (\mu + \lambda)\nabla\dv\vec{u}}{\Lnorm{2}} \\
	\lesssim & \norm{\rhod}{\Lnorm{\infty}}^{1/2}\norm{Q_\mrm{m}^{+,o}}{\Lnorm{\infty}} \norm{\rhod^{1/2}\dt \vec{u}}{\Lnorm{2}} \\
	& + \norm{\nabla p^o}{\Lnorm{2}} + \norm{\mathcal I_{\vec{u},1}}{\Lnorm{2}} + \norm{\mathcal I_{\vec{u},2}}{\Lnorm{2}}.
	\end{aligned}
\end{equation}
Similarly, one can derive
\begin{align*}
	\norm{\nabla^2 \nablah \vec{u}}{\Lnorm{2}} \lesssim & \norm{\nablah (\mu \Delta \vec{u} + (\mu + \lambda)\nabla\dv\vec{u})}{\Lnorm{2}} ~~~~ \text{and} \\
	\norm{\partial_{zzz}\vec{u}}{\Lnorm{2}} \lesssim & \norm{\dz (\mu \Delta \vec{u} + (\mu + \lambda)\nabla\dv\vec{u})}{\Lnorm{2}} + \norm{\nabla^2 \nablah \vec{u}}{\Lnorm{2}}. 
\end{align*}
Hence, it follows that, after employing \eqref{eq:momentum-lin} again and applying H\"older's inequality, 
\begin{equation}\label{linest:u-H3}
	\begin{aligned}
	& \norm{\nabla^3 \vec{u}}{\Lnorm{2}} \lesssim  \norm{\nabla (\mu \Delta \vec{u} + (\mu + \lambda)\nabla\dv\vec{u})}{\Lnorm{2}} \\
	& ~~~~ \lesssim \norm{\rhod Q_\mrm{m}^{+,o}}{\Lnorm{\infty}} \norm{\dt \nabla \vec{u}}{\Lnorm{2}} + \norm{\rhod^{-1/2}\nabla(\rhod Q_\mrm{m}^{+,o})}{\Lnorm{6}} \norm{\rhod^{1/2}\dt \vec{u}}{\Lnorm{3}}  \\
	& ~~~~ ~~~~ + \norm{\nabla^2 p^o}{\Lnorm{2}} + \norm{\nabla \mathcal I_{\vec{u},1}}{\Lnorm{2}} + \norm{\nabla \mathcal I_{\vec{u},2}}{\Lnorm{2}}.
	\end{aligned}
\end{equation}

Therefore, we conclude, from \eqref{linest:u-H2} and  \eqref{linest:u-H3}, after substituting inequalities \eqref{ap-101} -- \eqref{ap-104}, that 
\begin{equation}\label{linest:u-elliptic-1}
	\sup_{0\leq s \leq t} \norm{\nabla^2 \vec{u}(s)}{\Lnorm{2}}^2 \leq c_{u,2} \sup_{0\leq s \leq t} H_{u,2}(s),
\end{equation}
and
\begin{equation}\label{linest:u-elliptic-2}
	\int_0^t \norm{\nabla^3 \vec{u} (s)}{\Lnorm{2}}^2 \,ds \leq c_{u,2} \int_0^t G_{u,2} (s) \,ds,
\end{equation}
for some positive constant $ c_{u,2} \in (0,\infty) $, where 
\begin{align}
	& \begin{aligned}
	& H_{u,2} := \norm{\rhod}{\Lnorm{\infty}} \bigl( \norm{\moistvar}{\Lnorm{\infty}}^2 + 1 \bigr) \norm{\rhod^{1/2}\dt \vec u}{\Lnorm{2}}^2\\
	& ~~~~ + \mathcal N( \norm{\mathfrak q_\mrm{v}^o,\mathcal T^o, \vec u^o}{\Hnorm{1}},\norm{\rhod}{\Lnorm{\infty}}, \norm{\moistvar}{\Lnorm{\infty}},1 )\\
	& ~~~~ \times (\norm{\nabla \rhod}{\Lnorm{6}}^2 
	 +  \norm{\nabla \vec u^o, \nabla \mathfrak q_\mrm{v}^o, \nabla \mathfrak T^o}{\Lnorm{3}}^2   
	 ), 
	\end{aligned}\label{linsource:003}\\
	& \begin{aligned}
	& G_{u,2} :=  \bigl(\norm{\rhod}{\Lnorm{\infty}}^2 + 1\bigr) \bigl( \norm{\moistvar}{\Lnorm{\infty}}^2 + 1 \bigr)\norm{\nabla \dt \vec{u}}{\Lnorm{2}}^2 \\
	& + \mathcal N(\norm{\rhod^{1/2}, \moistvar, \vec u^o, {\mathfrak T^o}}{\Hnorm{2}}) \bigl( \norm{\rhod^{1/2} \dt \vec u}{\Lnorm{2}}^2 + 1 \bigr).
	\end{aligned}\label{linsource:004}
\end{align}
\subsubsection*{The $ \mathfrak T, \lbrace \mathfrak q_\mrm{j} \rbrace_{\mrm j \in \moistidx} $-estimates} The estimates for $ \mathfrak T, \mathfrak q_\mrm{j} $, $ \mrm{j} \in \lbrace \mrm{v}, \mrm{c}, \mrm{r} \rbrace $, are similar, if not simpler, to the ones for $ \vec{u} $, we leave the {details} to the reader and only record the results here:
	\begin{align*}
	& \dfrac{d}{dt} \biggl( \kappa \norm{\nabla \mathfrak T}{\Lnorm{2}}^2 +  \norm{ \lbrace \nabla \mathfrak q_\mrm{j} \rbrace_{\mrm{j} \in \moistidx}, (Q_\mrm{th}^{+,o})^{1/2}\dt \mathfrak T, \lbrace \dt \mathfrak q_\mrm{j}\rbrace_{\mrm{j}\in \moistidx} }{\Lnorm{2}}^2 \biggr) \\
	& ~~~~ + 2 \kappa \norm{\nabla\dt \mathfrak T}{\Lnorm{2}}^2 + 2 \norm{(Q_\mrm{th}^{+,o})^{1/2} \dt \mathfrak T, \lbrace \dt \mathfrak q_\mrm{j} \rbrace_{\mrm{j} \in \moistidx}, \lbrace \nabla \dt \mathfrak q \rbrace_{\mrm{j} \in \moistidx} }{\Lnorm{2}}^2 \\
	& \lesssim \norm{\dt \mathfrak T, \lbrace \dt \mathfrak q_\mrm{j}\rbrace_{\mrm j \in \moistidx}}{\Lnorm{2}} \norm{\mathcal I_{\T, 1}, \mathcal I_{\T, 2}, \lbrace \mathcal I_{\mrm{j},1},\mathcal I_{\mrm{j},2} \rbrace_{\mrm{j} \in \moistidx} }{\Lnorm{2}} 
	 + \norm{\dt Q_\mrm{th}^{+,o}}{\Lnorm{5}} \\
	 & ~~~~ ~~~~ \times \norm{\dt \mathfrak T}{\Lnorm{2}}^{7/5} \norm{\dt \mathfrak T}{\Lnorm{6}}^{3/5}  
	 + \norm{\dt \mathcal I_{\T, 1}, \dt \mathcal I_{\T, 2} }{\Lnorm{3/2}} \norm{\dt \mathfrak T}{\Lnorm{2}}^{1/2} \norm{\dt \mathfrak T}{\Lnorm{6}}^{1/2} \\
	& ~~~~ + \norm{\lbrace \dt \mathcal I_{\mrm j, 1}, \dt \mathcal I_{\mrm j, 2} \rbrace_{\mrm{j} \in \moistidx} }{\Lnorm{3/2}} \norm{\lbrace \dt \mathfrak q_\mrm{j} \rbrace_{\mrm{j} \in \moistidx}}{\Lnorm{2}}^{1/2} \norm{\lbrace \dt \mathfrak q_\mrm{j} \rbrace_{\mrm{j} \in \moistidx}}{\Lnorm{6}}^{1/2},
	\end{align*}
and
\begin{align*}
& \norm{\nabla^2 \mathfrak T}{\Lnorm{2}} \lesssim \norm{ Q_\mrm{th}^{+,o}}{\Lnorm{\infty}} \norm{\dt \mathfrak T}{\Lnorm{2}}  + \norm{\mathcal I_{\T, 1}, \mathcal I_{\T, 2}}{\Lnorm{2}}, \\
	& \norm{\lbrace \nabla^2 \mathfrak q_\mrm{j} \rbrace_{\mrm{j} \in \moistidx}}{\Lnorm{2}} \lesssim \norm{\lbrace \dt \mathfrak q_\mrm{j}, \mathcal I_{\mrm{j},1}, \mathcal I_{\mrm{j},2} \rbrace_{\mrm{j} \in \moistidx} }{\Lnorm{2}},\\
	& \norm{\nabla^3 \mathfrak T}{\Lnorm{2}} \lesssim \norm{ Q_\mrm{th}^{+,o}}{\Lnorm{\infty}} \norm{\nabla \dt  \mathfrak T}{\Lnorm{2}}  + \norm{\nabla Q_\mrm{th}^{+.o}}{\Lnorm{6}} \norm{\dt \mathfrak T}{\Lnorm{3}} \\
	& ~~~~ + \norm{\nabla \mathcal I_{\T, 1},\nabla \mathcal I_{\T, 2}}{\Lnorm{2}},\\
	& \norm{\lbrace \nabla^3 \mathfrak q_\mrm{j} \rbrace_{\mrm{j} \in \moistidx}}{\Lnorm{2}}\lesssim \norm{\lbrace \nabla \dt \mathfrak q_\mrm{j}, \nabla \mathcal I_{\mrm{j},1}, \nabla \mathcal I_{\mrm{j},2} \rbrace_{\mrm{j} \in \moistidx} }{\Lnorm{2}}.
\end{align*}

Then, after substituting inequalities \eqref{ap-201} -- \eqref{ap-306}, applying the Sobolev embedding inequality and Young's inequality, one can conclude that,

\begin{gather}
	\begin{gathered}
	\sup_{0\leq s\leq t} \norm{\nabla \mathfrak T (s), \lbrace \nabla \mathfrak q_\mrm{j}(s) \rbrace_{\mrm{j}\in \moistidx}, \dt \mathfrak T(s),  \lbrace \dt \mathfrak q_\mrm{j}(s) \rbrace_{\mrm{j}\in \moistidx}}{\Lnorm{2}}^2 \\
	 + {\int_0^t }\norm{\dt \mathfrak T (s), \lbrace \dt \mathfrak q_\mrm{j}(s) \rbrace_{\mrm{j}\in \moistidx}}{\Hnorm{1}}^2 
	  \,ds \\
	  \leq c_o e^{c_{o}\int_0^t H_{o,1}(s) \,ds } \biggl( \int_0^t G_{o,1}(s) \,ds + \initial \biggr),
	\end{gathered} \label{linest:Total-parabolic} \\
	\begin{gathered}
	 \sup_{0\leq s\leq t}\norm{\nabla^2 \mathfrak T(s)}{\Lnorm{2}}^2 \leq c_{o} \sup_{0\leq s \leq t} H_{T,2}(s),
	 \end{gathered} \label{linest:temperature-elliptic} \\
	 \begin{gathered}
	 \sup_{0\leq s \leq t} \norm{\nabla^2 \mathfrak q_\mrm{v}(s),\nabla^2 \mathfrak q_\mrm{c}(s), \nabla^2 \mathfrak q_\mrm{r}(s)  }{\Lnorm{2}}^2 \leq c_o \sup_{0\leq s \leq t} H_{q,2}(s), \end{gathered} \label{linest:moist-elliptic} \\
	 \begin{gathered}
	 \int_0^t \norm{\nabla^3 \mathfrak T(s),\nabla^3 \mathfrak q_\mrm{v}(s),\nabla^3 \mathfrak q_\mrm{c}(s), \nabla^3 \mathfrak q_\mrm{r}(s)}{\Lnorm{2}}^2 \,ds
	 \leq  c_{o} \int_0^t G_{o,2} (s) \,ds,
	\end{gathered} \label{linest:Total-elliptic-2} 
\end{gather}
{for any positive constant $ \varepsilon \in (0,1) $ and} for some positive constant $ c_o \in (0,\infty) $, where
\begin{align}
	& \begin{aligned}
	H_{o,1}:= & \mathcal N(\norm{\vec u^o, \mathfrak T^o, \moistvar}{\Hnorm{2}}, \norm{\dz \log \rhod}{\Lnorm{6}} , C_\varepsilon ) \\
	& + \norm{\lbrace \dt \mathfrak q_\mrm{j}^o\rbrace_{\mrm j \in \moistidx}}{\Lnorm{5}}^{10/7},	\end{aligned} \label{linsource:005} \\
	& \begin{aligned}
	G_{o,1}:= & \varepsilon \norm{\nabla \vec{u}^o, \nabla \mathfrak T^o, \lbrace \nabla \mathfrak q_\mrm{j}^o \rbrace_{\mrm j \in \moistidx}, \dz \log \rhod }{\Lnorm{3}}^2 \\
	& + \varepsilon \norm{\dt \vec u^o, \dt \mathfrak T^o,\lbrace \dt \mathfrak q_\mrm{j}^o\rbrace_{\mrm j \in \moistidx},\dt \log \rhod}{\Hnorm{1}}^2,
	\end{aligned} \label{linsource:006}  \\
	& \begin{aligned}
	H_{T,2}:= & ( \norm{\moistvar}{\Lnorm{\infty}}^2 + 1) \\
&{\times \bigl( \norm{\dt \mathfrak T}{\Lnorm{2}}^2+ 1+ \norm{\vec u^o, \mathfrak T^o}{\Hnorm{1}}^2\norm{\nabla \vec u^o, \nabla \mathfrak T^o}{\Lnorm{3}}^2 \bigr) }\\
	& + \mathcal N(\norm{\vec u^o, \mathfrak T^o, \moistvar}{\Hnorm{1}}, \norm{\dz \log\rhod }{\Lnorm{3}},1 ),
	\end{aligned} \label{linsource:007} \\
	& \begin{aligned}
	H_{q,2}:= &  \norm{\lbrace \dt \mathfrak q_\mrm{j}\rbrace_{\mrm j \in \moistidx}}{\Lnorm{2}}^2 + (1+\norm{\vec u^o}{\Hnorm{1}}^2 ) \norm{ \lbrace \nabla \mathfrak q_\mrm{j}^o\rbrace_{\mrm j \in \moistidx}}{\Lnorm{3}}^2  \\
	 & + \mathcal N(\norm{\vec u^o, \mathfrak T^o, \moistvar}{\Hnorm{1}}, \norm{\dz \log\rhod }{\Lnorm{3}},1 ) ,
	\end{aligned} \label{linsource:008} \\
	& \begin{aligned}
	G_{o,2} := &( \norm{\moistvar}{\Lnorm{\infty}}^2 + 1 ) \norm{ \nabla \dt  \mathfrak T, \lbrace \nabla \dt \mathfrak q_\mrm{j}\rbrace_{\mrm{j}\in \moistidx} }{\Lnorm{2}}^2 \\
	& + \norm{\lbrace \nabla \mathfrak q_\mrm{j}^o \rbrace_{\mrm j \in \moistidx}}{\Lnorm{6}}^2 \norm{\dt \mathfrak T}{\Lnorm{3}}^2 \\
	& + \mathcal N(\norm{\vec u^o, \mathfrak T^o, \moistvar, \log \rhod}{\Hnorm{2}},1).
	\end{aligned} \label{linsource:009}
\end{align}

\subsubsection*{Summarizing the estimates}

It follows from H\"older's inequality that, 
\begin{gather*}
	\int_0^t \norm{\nabla \vec u^o (s)}{\Hnorm{2}} \,ds \leq t^{1/2} \bigl(\int_0^t \norm{\nabla \vec u^o (s)}{\Hnorm{2}}^2 \,ds \bigr)^{1/2} \\
	 \leq (\mathfrak c_{u,1} + \mathfrak c_{u,2})t + \mathfrak c_{u,3} t^{1/2} \leq 1,
\end{gather*}
for $ t \in (0, T_1^* ] $ with $ T_1^* \in (0,1) $ small enough. 
Then \eqref{linest:d-1} and \eqref{linest:d-2} imply {that} there exist positive constants $ \mathfrak c_{d,1}, \mathfrak c_{d,2}\in (0,\infty) $, such that
\begin{equation}\label{unest:density-lin}
\begin{gathered}
	\sup_{0\leq s \leq t} \norm{\rhod^{1/2}(s), \log \rhod(s)}{\Hnorm{2}}^2 \leq \mathfrak c_{d,1} ( 1 + \mathfrak c_\mrm{in} ), \\
	\sup_{0\leq s \leq t}\norm{\dt \rhod^{1/2}(s), \dt \log \rhod(s)}{\Hnorm{1}}^2
 \leq \mathfrak c_{d,2}(\mathfrak c_{u, 1} + \mathfrak c_{u, 2})( 1 + \mathfrak c_\mrm{in}), 
 \end{gathered}
 \end{equation}
 for $ t \in (0, T_1^* ] $. 
 
 From the definitions of $ G_{u,1}, G_{o,1} $ in \eqref{linsource:002} and \eqref{linsource:006}, after applying the Sobolev embedding inequality, one can conclude that
 \begin{gather*}
 	\int_0^t \bigl( G_{u,1}(s) + G_{o,1}(s) \bigr) \,ds \leq \varepsilon (\mathfrak c_{u,1} + \mathfrak c_{o,1} ) \\
 	+ \mathcal N (\mathfrak c_{u,1}, \mathfrak c_{u,2}, \mathfrak c_{o,1}, \mathfrak c_{T} , \mathfrak c_{q}, C_\varepsilon ) t
 	\leq 1,
 \end{gather*}
 for $ \varepsilon \leq \varepsilon^* $ and $ t \in (0,T_2^*] $, with some $ \varepsilon^* \in (0,1) $ and $ T_2^* \in(0, T_1^*] $ small enough. On the other hand, from the definitions of $ H_{u,1}, H_{o,1} $ in \eqref{linsource:001} and \eqref{linsource:005}, after applying the Sobolev embedding inequality, one has
 \begin{gather*}
 	\int_0^t \bigl( H_{u,1}(s) + H_{o,1}(s) \bigr) \,ds )\leq \mathcal N (\mathfrak c_{u,1}, \mathfrak c_{u,2}, \mathfrak c_{o,1}, \mathfrak c_{T} , \mathfrak c_{q}, C_\varepsilon ) \\
 	\times \int_0^t \bigl(1 + \norm{\lbrace \dt \mathfrak q_{\mrm{j}}^o(s) \rbrace_{\mrm j \in \moistidx} }{\Lnorm{5}}^{10/7} \bigr) \,ds\\
 	\leq  \mathcal N (\mathfrak c_{u,1}, \mathfrak c_{u,2}, \mathfrak c_{o,1}, \mathfrak c_{T} , \mathfrak c_{q}, C_\varepsilon ) \bigl( t + t^{5/14} \bigr) \leq 1,
 \end{gather*}
 for $ t \in (0,T_3^*]$, with some $ T_3^* \in (0, T_2^*] $ small enough, 
 where we have used the fact that, as a consequence of the Gagliardo-Nirenberg interpolation inequality and H\"older's inequality, 
  \begin{align*}
 	& \int_0^t  \norm{\lbrace \dt \mathfrak q_{\mrm{j}}^o(s) \rbrace_{\mrm j \in \moistidx} }{\Lnorm{5}}^{10/7} \,ds \leq t^{5/14} \\
 	& ~~ \times \bigl( \int_0^t \norm{\lbrace \dt \mathfrak q_{\mrm{j}}^o(s) \rbrace_{\mrm j \in \moistidx} }{\Lnorm{2}}^{2/9}\norm{\lbrace \dt \mathfrak q_{\mrm{j}}^o(s) \rbrace_{\mrm j \in \moistidx} }{\Hnorm{1}}^{2}  \bigr)^{9/14}.
 \end{align*}
Therefore \eqref{linest:u-parabolic} and \eqref{linest:Total-parabolic} imply that, for $ t \in (0,T_3^*] $, 
\begin{equation}\label{unest:parabolic}
	\begin{aligned}
		& \sup_{0\leq s\leq t} \bigl( \norm{\rhod^{1/2}\dt \vec u(s), \dt  \mathfrak T(s), \lbrace \dt \mathfrak q_\mrm{j}(s) \rbrace_{\mrm j \in \moistidx} }{\Lnorm{2}}^2  \\
		& ~~~~ ~~~~ + \norm{\vec{u}(s), \mathfrak T(s), \lbrace \mathfrak q_\mrm{j}(s)\rbrace_{\mrm j \in \moistidx}}{\Hnorm{1}}^2 \bigr) \\
		& ~~~~ + \int_0^t \bigl( \norm{\dt \vec u(s) ,\dt  \mathfrak T(s), \lbrace \dt \mathfrak q_\mrm{j}(s) \rbrace_{\mrm j \in \moistidx} }{\Hnorm{1}}^2 \\
		& ~~~~ ~~~~ + \norm{\rhod^{1/2} \dt \vec{u}(s)}{\Lnorm{2}}^2 \bigr) \,ds \leq c_1 (1 + \mathfrak c_\mrm{in}),
	\end{aligned}
\end{equation}
for some positive constant $ c_1 \in (0,\infty) $, where we have used the fundamental theorem of calculus and the embedding inequality as below:
\begin{gather*}
	\norm{\vec u(s), \mathfrak T(s), \lbrace \mathfrak q_\mrm{j}(s)\rbrace_{\mrm j \in \moistidx}}{\Lnorm{2}}\leq \norm{\vec u(0),\mathfrak T(0), \lbrace \mathfrak q_\mrm{j}(0)\rbrace_{\mrm j \in \moistidx}}{\Lnorm{2}} \\
	 + \int_0^s \norm{\dt \vec{u}(\tau),\dt \mathfrak T(\tau), \lbrace \dt \mathfrak q_\mrm{j}(\tau)\rbrace_{\mrm j \in \moistidx}}{\Lnorm{2}} \,d\tau, \\
	\norm{\dt \vec u}{\Lnorm{2}} \lesssim \norm{\rhod^{1/2} \dt \vec u}{\Lnorm{2}} + \norm{\nabla \dt \vec u}{\Lnorm{2}}.
\end{gather*}

Next, applying the Gagliardo-Nirenberg inequality yields
\begin{gather*}
	\norm{\lbrace \nabla \mathfrak q_{\mrm{j}}^o \rbrace_{\mrm{j}\in \moistidx}}{\Lnorm{3}}^2 \lesssim \norm{\lbrace \nabla \mathfrak q_{\mrm{j}}^o \rbrace_{\mrm{j}\in \moistidx}}{\Lnorm{2}}^2 \\
	 + \norm{\lbrace \nabla \mathfrak q_{\mrm{j}}^o \rbrace_{\mrm{j}\in \moistidx}}{\Lnorm{2}} \norm{\lbrace \nabla^2 \mathfrak q_{\mrm{j}}^o \rbrace_{\mrm{j}\in \moistidx}}{\Lnorm{2}}\\
	 \leq \mathfrak c_{o,1} + \mathfrak c_{o,1}^{1/2} \mathfrak c_{q}^{1/2}.
\end{gather*}
Then applying Young's inequality, \eqref{unest:parabolic}, and\eqref{unest:density-lin} to $ H_{q,2} $ in \eqref{linsource:008}, from \eqref{linest:moist-elliptic}, yields
\begin{equation}\label{unest:elliptic-q}
	\sup_{0\leq s\leq t} \norm{\lbrace\nabla^2 \mathfrak q_\mrm{j}(s)\rbrace_{\mrm{j}\in \moistidx} }{\Lnorm{2}}^2 \leq \dfrac{1}{2} \mathfrak c_q + \mathcal N_q(\mathfrak c_{o,1}, \mathfrak c_\mrm{in} ),
\end{equation}
for $ t \in (0,T_3^*] $ and some smooth function $ \mathcal N_q(\cdot) $.
Similarly, from \eqref{linest:u-elliptic-1} and \eqref{linest:temperature-elliptic}, one can conclude that
\begin{equation}\label{unest:elliptic-t-u}
	\sup_{0\leq s\leq t} \norm{\nabla^2 \vec{u}(s), \nabla^2 \mathfrak T}{\Lnorm{2}}^2 \leq \dfrac{1}{4} (\mathfrak c_{u,2} + \mathfrak c_T) + \mathcal N_u (\mathfrak c_{u,1}, \mathfrak c_{o,1}, \mathfrak c_q, \mathfrak c_\mrm{in} ),
\end{equation}
for $ t \in (0,T_3^*] $ and some smooth function $ \mathcal N_u (\cdot) $.
Moreover, it is easy to check that 
\begin{gather*}
G_{u,2} + G_{o,2} \leq \mathcal N_{o}'(\norm{\rhod^{1/2}, \log\rhod, \vec u^o, \mathfrak T^o , \moistvar }{\Hnorm{2}}) \\
 \times ( 1 + \norm{\dt \vec u,\dt \mathfrak T, \lbrace \dt \mathfrak q_\mrm{j}\rbrace_{\mrm{j}\in \moistidx} }{\Hnorm{1}}^2),
\end{gather*}
for some smooth function $ \mathcal N_o' (\cdot) $
Therefore, from \eqref{linest:u-elliptic-2} and \eqref{linest:Total-elliptic-2}, after applying \eqref{unest:parabolic}, one can conclude that
\begin{equation}\label{unest:elliptic-o-2}
	\begin{gathered}
	\int_0^t \norm{\nabla^3 \vec u(s), \nabla^3 \mathfrak T(s), \lbrace \nabla^3 \mathfrak q_\mrm{j}(s)\rbrace_{\mrm j \in \moistidx}}{\Lnorm{2}}^2 \,ds \\
	 \leq \mathcal N_{o} ( \mathfrak c_{u,1},\mathfrak c_{u,2}, \mathfrak c_{o,1}, \mathfrak c_{T}, \mathfrak c_{q} ,\mathfrak c_{in}),
	\end{gathered}
\end{equation}
for $ t \in (0,T_3^*] $ and some smooth function $ \mathcal N_o (\cdot) $.

Now, let
\begin{equation}\label{id:constants}
	\begin{gathered}
		\mathfrak c_{u,1} = \mathfrak c_{o, 1} = c_1 ( 1 + \mathfrak c_\mrm{in}) , \\
		\mathfrak c_{q} = 4 \mathcal N_q(\mathfrak c_{0,1},\mathfrak c_\mrm{in}), \\
		\mathfrak c_{u,2} = \mathfrak c_{T} = 4 \mathcal N_u(\mathfrak c_{u,1},\mathfrak c_{o,1}, \mathfrak c_{q},\mathfrak c_\mrm{in} ), \\
		\mathfrak c_{u,3} = \mathfrak c_{o,2} = \mathcal N_{o}( \mathfrak c_{u,1},\mathfrak c_{u,2}, \mathfrak c_{o,1}, \mathfrak c_{T}, \mathfrak c_{q} ,\mathfrak c_{in}),
	\end{gathered}
\end{equation}
and $ T^* = T^*_3 $,
where 
$ c_1, \mathcal N_q(\cdot ) ,  \mathcal N_u(\cdot) , \mathcal N_{o}(\cdot) $
are defined in 
\eqref{unest:parabolic},  \eqref{unest:elliptic-q}, \eqref{unest:elliptic-t-u}, and \eqref{unest:elliptic-o-2}, respectively. 

Then $ (\vec u, \mathfrak T, \lbrace \mathfrak q_\mrm j \rbrace_{\mrm j \in \moistidx} ) \in \mathfrak Y $, and we have constructed a map 
\begin{equation}\label{def:mapping}
\mathcal M : \mathfrak Y \mapsto \mathfrak Y  ~~ \text{by} ~~ \mathcal M(\vec u^o, \mathfrak T^o, \moistvar) := (\vec u, \mathfrak T,\lbrace \mathfrak q_\mrm j \rbrace_{\mrm j \in \moistidx} ), 
\end{equation} 
where $ (\vec u, \mathfrak T,\lbrace \mathfrak q_\mrm j \rbrace_{\mrm j \in \moistidx} ) $ is the solution to \eqref{eq:momentum-lin} -- \eqref{eq:rain-ratio-lin}, constructed above. 

Moreover, \eqref{unest:density-lin} implies that $ 0 < {\rhod'} < \rhod < \rhod'' < \infty $, for some positive and finite constants $ {\rhod'} $ and $ \rhod'' $, depending only on $ \mathfrak c_\mrm{in} $. \eqref{unest:density-lin} -- \eqref{unest:elliptic-o-2} yield that, 
there exists a constant $ \mathfrak c_o $, depending on $ \mathfrak c_{in} $, such that the following estimate holds: 
\begin{equation}\label{unest:total}
	\begin{aligned}
	& \sup_{0\leq s \leq T^*} \bigl(\norm{\rhod^{1/2}(s), \log \rhod (s), \vec u (s), \mathfrak T (s), \lbrace \mathfrak q_\mrm j (s)\rbrace_{\mrm j \in \moistidx} }{\Hnorm{2}}^2 \\
	& ~~~~ + \norm{\dt \rhod^{1/2}(s), \dt \log \rhod(s)}{\Hnorm{1}}^2 \\
	& ~~~~ + 
	\norm{\dt \vec u(s), \dt \mathfrak T(s), \lbrace \dt \mathfrak q_\mrm j (s) \rbrace_{\mrm j \in \moistidx}}{\Lnorm{2}}^2 \bigr) \\
	& + \int_0^{T^*} \norm{\vec u (s), \mathfrak T (s), \lbrace \mathfrak q_\mrm j (s)\rbrace_{\mrm j \in \moistidx} }{\Hnorm{3}}^2 \\
	& ~~~~ + 
	\norm{\dt \vec u(s), \dt \mathfrak T(s), \lbrace \dt \mathfrak q_\mrm j (s) \rbrace_{\mrm j \in \moistidx}}{\Hnorm{1}}^2 \,ds \leq \mathfrak c_o. 
	\end{aligned}
\end{equation}

\subsection{The {Contraction} mapping}\label{sec:contracting}

Now we consider $ \lbrace (\vec u^o_i, \mathfrak T^o_i, \lbrace \mathfrak q_{\mrm j, i}^o \rbrace_{\mrm j \in \moistidx}) \in \mathfrak Y  \rbrace_{i=0,1} $, and let $ (\vec u_i, \mathfrak T_i, \lbrace \mathfrak q_{\mrm j, i} \rbrace_{\mrm j \in \moistidx}) = \mathcal M (\vec u^o_i, \mathfrak T^o_i, \lbrace \mathfrak q_{\mrm j, i}^o \rbrace_{\mrm j \in \moistidx}) $, $ i = 1,2 $, be the solution map defined in \eqref{def:mapping}. The solutions to \eqref{eq:dry-air-mass-lin} with $ \vec u^o = \vec u^o_i $, $ i = 1,2 $, are denoted as $ \rho_{\mrm d, i} $. 
Let $ \delta (\cdot) $ denote the action of taking difference of variable with indices $ i = 1 $ and $ i = 2 $, i.e., $ \delta f = f_1 - f_2 $. 
For instance, 
\begin{gather*}
	\delta \rhod = \rho_{\mrm d , 1} - \rho_{\mrm d,2}, \\
	\delta \vec u = \vec u_1 - \vec u_2, ~ \delta \mathfrak T = \mathfrak T_1 - \mathfrak T_2, ~ 
	\lbrace \delta \mathfrak q_\mrm j = \mathfrak q_{\mrm j , 1} - \mathfrak q_{\mrm j, 2} \rbrace_{\mrm j \in \moistidx} ,\\
	\delta \vec u^o = \vec u_1^o - \vec u_2^o, ~ \delta \mathfrak T^o = \mathfrak T_1^o - \mathfrak T_2^o, ~
	\lbrace \delta \mathfrak q_\mrm j^o = \mathfrak q_{\mrm j , 1}^o - \mathfrak q_{\mrm j, 2}^o \rbrace_{\mrm j \in \moistidx}.
\end{gather*} 
Then $ \delta \rhod $ and $ (\delta \vec u, \delta\mathfrak T, \lbrace \delta \mathfrak q_\mrm j \rbrace_{\mrm j \in \moistidx} ) $ satisfy the following equations:
\begin{gather}
	\dt \delta \rhod + \dv (\rho_{\mrm d, 1} \delta \vec u^o) + \dv (\delta \rhod \vec u_2^o) = 0, \label{eq:dry-air-mass-lin-ct} \\
	\begin{gathered}
	\rho_{\mrm d, 1} Q_{\mrm m,1}^{+,o} \dt \delta \vec u - \mu \Delta \delta \vec u - (\mu +\lambda) \nabla \dv \delta \vec u = - \delta (\rhod Q_\mrm m ^{+,o}) \dt \vec u_2 - \nabla \delta p^o \\
	 + \delta \mathcal{I}_{\vec u, 1} + \delta \mathcal I_{\vec u, 2}, 
	 \end{gathered} \label{eq:momentum-lin-ct}\\
	 Q_{\mrm{th}, 1}^{+,o} \dt \delta \mathfrak T - \kappa \Delta \delta \mathfrak T = - \delta Q_\mrm{th}^{+,o} \dt \mathfrak T_2 + \delta \mathcal I_{\T, 1} + \delta \mathcal I_{\T, 2}, \label{eq:temperature-lin-ct} \\
	 \dt \delta \mathfrak q_\mrm j - \Delta \delta \mathfrak q_\mrm j = \delta \mathcal I_{\mrm j, 1} + \delta \mathcal I_{\mrm j, 2}, ~~~~ \mrm j \in \lbrace \mrm v, \mrm c \rbrace, \label{eq:moist-ratio-lin-ct} \\
	 \begin{gathered}
	 \dt \delta \mathfrak q_\mrm r - \Delta \delta \mathfrak q_\mrm r = \delta \mathcal I_{\mrm r, 1} + \delta [ \mathcal I_{\mrm r, 2} - (\mathfrak q_\mrm r^o + \psi_\mrm r)\Vr \dz \log \rhod ] \\
	 + \delta [(\mathfrak q_\mrm r^o + \psi_\mrm r)\Vr \dz \log \rhod].\end{gathered} \label{eq:moist-ratio-lin-ct-2}
\end{gather}
Then, applying the standard $L^2 $-estimates to \eqref{eq:dry-air-mass-lin-ct} 
-- \eqref{eq:moist-ratio-lin-ct} yields,
\begin{equation}\label{ctest:density}
	\begin{gathered}
		\dfrac{d}{dt} \norm{\delta \rhod}{\Lnorm{2}}^2 \lesssim \norm{\nabla \vec u_2^o}{\Hnorm{2}} \norm{\delta\rhod}{\Lnorm{2}}^2 \\
		 + \norm{\rho_{\mrm d, 1}}{\Hnorm{2}} \norm{\delta \rhod}{\Lnorm{2}} \norm{\delta \vec u^o}{\Hnorm{1}},
	\end{gathered}
\end{equation}
and
\begin{equation}\label{ctest:total}
	\begin{gathered}
	\dfrac{d}{dt} \norm{\rho_{\mrm d, 1}^{1/2}(Q_{\mrm{m},1}^{+,o})^{1/2} \delta \vec u, (Q_{\mrm{th}, 1}^{+,o})^{1/2} \delta \mathfrak T, \lbrace \delta \mathfrak q_\mrm j \rbrace_{\mrm j \in \moistidx}}{\Lnorm{2}}^2  
	  + 2 \mu \norm{\nabla \delta \vec u}{\Lnorm{2}}^2 \\
	 + 2 (\mu +\lambda ) \norm{\dv \delta \vec u}{\Lnorm{2}}^2 + 2 \kappa \norm{\nabla \delta \mathfrak T}{\Lnorm{2}}^2 + 2\norm{\lbrace \nabla \delta  \mathfrak q_\mrm j \rbrace_{\mrm j \in \moistidx}}{\Lnorm{2}}^2 \\
	 \leq  \norm{\delta (\rhod Q_\mrm{m}^{+,o}), \delta Q_\mrm{th}^{+,o}}{\Lnorm{2}} \norm{\dt \vec u_2,\dt \mathfrak T_2}{\Lnorm{3}} \norm{\delta \vec u, \delta \mathfrak T}{\Lnorm{6}}\\
	 + \norm{\dt(Q_{\mrm{th},1}^{+,o})}{\Lnorm 
	 2} \norm{\delta \mathfrak T}{\Lnorm{3}} \norm{\delta \mathfrak T}{\Lnorm{6}}\\ + \norm{\dt(\rho_{\mrm d, 1}Q_{\mrm{m},1}^{+,o})}{\Lnorm 
	 2} \norm{\delta \vec u}{\Lnorm{3}} \norm{\delta \vec u}{\Lnorm{6}}
	 + \norm{\delta p^o}{\Lnorm{2}} \norm{\nabla \delta \vec u}{\Lnorm{2}}\\
	  + \norm{\lbrace \delta \mathcal I_{\mrm j, 1}, \delta \mathcal I_{\mrm j, 2}\rbrace_{\mrm j \in \lbrace \vec u, \T,\mrm v, \mrm{c}} \rbrace }{\Lnorm{3/2}} \norm{\delta \vec u, \delta \mathfrak T, \lbrace \delta \mathfrak q_\mrm j \rbrace_{\mrm j \in \moistidx}}{\Lnorm{3}}\\
	  + \norm{\delta [\mathcal I_{\mrm r, 2} - (\mathfrak q_{\mrm r}^o + \psi_\mrm r)\Vr \dz \log \rhod]}{\Lnorm{3/2}} \norm{\delta \mathfrak q_\mrm r}{\Lnorm{3}}\\
	  +  \norm{\delta[(\mathfrak q_{\mrm r}^o+\psi_{\mrm r}^o)\Vr \log \rhod]}{\Lnorm{2}}\norm{\dz \delta \mathfrak q_\mrm r}{\Lnorm{2}} \\
	   + \norm{\delta [\dz\bigl((\mathfrak q_{\mrm r}^o + \psi_{\mrm r}^o)\Vr\bigr) \log \rhod] }{\Lnorm{3/2}} \norm{\delta \mathfrak q_\mrm r}{\Lnorm{3}}
	  .
	\end{gathered}
\end{equation}
One can easily check, after applying Young's inequality, Gr\"onwall's inequality, and H\"older's inequality to \eqref{ctest:density}, that
\begin{equation}\label{ctest:001}
	\sup_{0\leq s \leq t} \norm{\delta \rhod(s)}{\Lnorm{2}}^2 \leq \varepsilon' c_1 e^{c_1 ( 1 + C_{\varepsilon'} t^{1/2})t^{1/2} }\times \int_0^t \norm{\delta \vec u^o(s)}{\Hnorm{1}}^2 \,ds,
\end{equation}
for $ t \in (0,T^*] $ and any $ \varepsilon' \in (0,1) $, where we have used the uniform estimate in \eqref{unest:total}.

The estimates of the $ L^{3/2} $-norms
in \eqref{ctest:total} are tedious but straightforward. We skip the details and only record the result here:
\begin{align*}
	& \norm{\lbrace \delta \mathcal I_{\mrm j, 1}, \delta \mathcal I_{\mrm j, 2}\rbrace_{\mrm j \in \lbrace \vec u, \T,\mrm v, \mrm{c}} \rbrace }{\Lnorm{3/2}} + \norm{\delta [\mathcal I_{\mrm r, 2}-(\mathfrak q_{\mrm r}^o + \psi_\mrm r)\Vr \dz \log \rhod]}{\Lnorm{3/2}} \\
	& ~~~~ ~~~~  + \norm{\delta [\dz\bigl((\mathfrak q_{\mrm r}^o + \psi_{\mrm r}^o)\Vr\bigr) \log \rhod] }{\Lnorm{3/2}} \\
	& ~~~~
	 \lesssim \mathcal N( \norm{\lbrace \rho_{\mrm d, i}, \vec u^o_i, \mathfrak T_i^o, \lbrace \mathfrak q_{\mrm j, i}^o \rbrace_{\mrm j \in \moistidx} \rbrace_{i=1,2}}{\Hnorm{2}},1) \\
	& ~~~~ ~~~~ \times \bigl( \norm{\delta \rhod}{\Lnorm{2}} + \norm{\delta \vec u^o, \delta \mathfrak T^o, \lbrace \delta \mathfrak q_\mrm j^o \rbrace_{\mrm j \in \moistidx}}{\Hnorm{1}} \bigr).
\end{align*}
Therefore, after applying the Gagliardo-Nirenberg inequality, it follows from \eqref{ctest:total} that
\begin{equation}\label{ctest:total-2}
\begin{gathered}
	\dfrac{d}{dt} \norm{\rho_{\mrm d, 1}^{1/2}(Q_{\mrm{m},1}^{+,o})^{1/2} \delta \vec u, (Q_{\mrm{th}, 1}^{+,o})^{1/2} \delta \mathfrak T, \lbrace \delta \mathfrak q_\mrm j \rbrace_{\mrm j \in \moistidx}}{\Lnorm{2}}^2  
	+ 2 \mu \norm{\nabla \delta \vec u}{\Lnorm{2}}^2 \\
	+ 2 (\mu +\lambda ) \norm{\dv \delta \vec u}{\Lnorm{2}}^2 + 2 \kappa \norm{\nabla \delta \mathfrak T}{\Lnorm{2}}^2 + 2\norm{\lbrace \nabla \delta  \mathfrak q_\mrm j \rbrace_{\mrm j \in \moistidx}}{\Lnorm{2}}^2 \\
	\leq \mathcal N( \norm{\lbrace \rho_{\mrm d, i}, \vec u^o_i, \mathfrak T_i^o, \lbrace \mathfrak q_{\mrm j, i}^o \rbrace_{\mrm j \in \moistidx} \rbrace_{i=1,2}}{\Hnorm{2}},1) \\
	\times \bigl( \norm{\delta \rhod}{\Lnorm{2}} + \norm{\lbrace \delta \mathfrak q_\mrm j ^o \rbrace_{\mrm j \in \moistidx}}{\Lnorm{2}} \bigr) \\
		\times \bigl( 1 + \norm{\dt \vec u_2, \dt \mathfrak T_2}{\Lnorm{2}}^{1/2}\norm{\dt \vec u_2, \dt \mathfrak T_2}{\Hnorm{1}}^{1/2} \bigr) \norm{\delta \vec u, \delta \mathfrak T}{\Hnorm{1}} \\
	+ \mathcal N ( \norm{\lbrace \rho_{\mrm d, i}, \lbrace \mathfrak q_{\mrm j, i}^o \rbrace_{\mrm j \in \moistidx} \rbrace_{i=1,2}}{\Hnorm{2}},1) \norm{\dt \rho_{\mrm d,1}, \lbrace \dt \mathfrak q_{\mrm j}^o\rbrace_{\mrm j \in \moistidx}}{\Lnorm{2}}\\
	\times \norm{\delta \mathfrak T, \delta \vec u}{\Lnorm{2}}^{1/2} \norm{\delta \mathfrak T, \delta \vec u}{\Hnorm{1}}^{3/2} \\
	+ \mathcal N( \norm{\lbrace \rho_{\mrm d, i}, \vec u^o_i, \mathfrak T_i^o, \lbrace \mathfrak q_{\mrm j, i}^o \rbrace_{\mrm j \in \moistidx} \rbrace_{i=1,2}}{\Hnorm{2}},1) \bigl( \norm{\delta \rhod}{\Lnorm{2}} \\
	 + \norm{\delta \vec u^o, \delta \mathfrak T^o, \lbrace \delta \mathfrak q_\mrm j^o \rbrace_{\mrm j \in \moistidx}}{\Hnorm{1}} \bigr) \\
		\times \norm{\delta \vec u, \delta \mathfrak T, \lbrace \delta \mathfrak q_\mrm j \rbrace_{\mrm j \in \moistidx}}{\Lnorm{2}}^{1/2}\norm{\delta \vec u, \delta \mathfrak T, \lbrace \delta \mathfrak q_\mrm j \rbrace_{\mrm j \in \moistidx}}{\Hnorm{1}}^{1/2}\\
	+ \mathcal N(\norm{\lbrace\rho_{\mrm d, i}, \mathfrak q_{\mrm r, i}^o\rbrace_{i = 1,2 }}{\Hnorm{2}},1) \norm{\delta \mathfrak q_\mrm r}{\Hnorm{1}} \norm{\delta \mathfrak q_{\mrm r}^o, \delta \rhod}{\Lnorm{2}}.
\end{gathered}
\end{equation}
Thus, again, after applying Young's inequality and H\"older's inequality to the above inequality, one can derive that
\begin{equation}\label{ctest:002}
	\begin{aligned}
		& \sup_{0\leq s \leq t} \norm{\delta \vec u, \delta \mathfrak T, \lbrace \delta \mathfrak q_{\mrm j} \rbrace_{\mrm j \in \moistidx}}{\Lnorm{2}}^2 + \int_0^t \norm{\nabla\delta \vec u, \nabla \delta \mathfrak T, \lbrace \nabla \delta \mathfrak q_\mrm j \rbrace_{\mrm j \in \moistidx}}{\Lnorm{2}}^2 \,ds \\
		& \leq \varepsilon'' \int_0^t \norm{\delta \vec u, \delta \mathfrak T, \lbrace \delta \mathfrak q_{\mrm j} \rbrace_{\mrm j \in \moistidx}, \delta \vec u^o, \delta \mathfrak T^o, \lbrace \delta \mathfrak q_{\mrm j}^o \rbrace_{\mrm j \in \moistidx}}{\Hnorm{1}}^2 \,ds \\
		& ~~~~ + c_2 C_{\varepsilon''} \int_0^t ( 1 + \norm{\dt \vec u_2, \dt \mathfrak T_2}{\Hnorm{1}})   (\norm{\delta \rhod}{\Lnorm{2}}^2 + 1)  \,ds \\
		& ~~~~ + c_2 C_{\varepsilon''} \int_0^t \norm{\delta \vec u, \delta \mathfrak T, \lbrace \delta \mathfrak q_\mrm j \rbrace_{\mrm j\in \moistidx}}{\Lnorm{2}}^2 \,ds \\
		& \leq (\varepsilon'' + c_2 C_{\varepsilon''} ) t \times \sup_{0\leq s\leq t} \norm{\delta \vec u, \delta \mathfrak T, \lbrace \delta \mathfrak q_{\mrm j} \rbrace_{\mrm j \in \moistidx}}{\Lnorm{2}}^2 \\
		& ~~~~
		+ \varepsilon'' \int_0^t \norm{\nabla\delta \vec u, \nabla \delta \mathfrak T, \lbrace \nabla \delta \mathfrak q_\mrm j \rbrace_{\mrm j \in \moistidx}}{\Lnorm{2}}^2 \,ds \\
		& ~~~~ + \varepsilon'' \int_{0}^t \norm{\delta \vec u^o, \delta \mathfrak T^o, \lbrace \delta \mathfrak q_{\mrm j}^o \rbrace_{\mrm j \in \moistidx}}{\Hnorm{1}}^2 \,ds \\
		& ~~~~ + c_2 C_{\varepsilon''} (t + t^{1/2}\mathfrak c_o^{1/2}) \bigl(1 + \sup_{0\leq s\leq t} \norm{\delta\rhod}{\Lnorm{2}}^2 \bigr),
	\end{aligned}
\end{equation}
for $ t \in (0,T^*] $, any $ \varepsilon'' \in (0,1) $, where we have used the uniform estimate in \eqref{unest:total}. Thus, after substituting \eqref{ctest:001} and choosing the smallness of 
parameters in the order of $ \varepsilon'', \varepsilon', T^* $, one can conclude that
\begin{equation}\label{ctest:conclusion}
	\begin{aligned}
	& \sup_{0\leq s \leq t} \norm{\delta \vec u(s), \delta \mathfrak T(s), \lbrace \delta \mathfrak q_{\mrm j}(s) \rbrace_{\mrm j \in \moistidx}}{\Lnorm{2}}^2 \\
	& ~~~~ + \int_0^t \norm{\delta \vec u(s),  \delta \mathfrak T(s), \lbrace \delta  \mathfrak q_\mrm j(s) \rbrace_{\mrm j \in \moistidx}}{\Hnorm{1}}^2 \,ds \\
	& \leq \dfrac{1}{2}\sup_{0\leq s\leq t}\norm{\delta \vec u^o(s),  \delta \mathfrak T^o(s), \lbrace  \delta \mathfrak q_{\mrm j}^o(s) \rbrace_{\mrm j \in \moistidx}}{\Lnorm{2}}^2 \\
	& ~~~~ + \dfrac{1}{2} \int_0^t \norm{\delta \vec u^o(s),  \delta \mathfrak T^o(s), \lbrace  \delta \mathfrak q_{\mrm j}^o(s) \rbrace_{\mrm j \in \moistidx}}{\Hnorm{1}}^2 \,ds,
	\end{aligned}
\end{equation}
for $ t \in (0,T^{*}] $. 
Therefore $ \mathcal M $, defined in \eqref{def:mapping}, is a {contraction} mapping in $ \mathfrak M_{T^{*}} $ for such a choice of $ T^* $. 

Consequently, the Banach fixed point theorem implies that there exists a unique fixed point of $ \mathcal M $ in $ \mathfrak M_{T^*} $. 
Let the fixed point of $ \mathcal M $ be $ (\vec u, \mathfrak T, \lbrace \mathfrak q_\mrm j \rbrace_{\mrm j \in \moistidx}) $, and let $ \rhod $ be the unique solution to \eqref{eq:dry-air-mass-app}. Then
$
(\rhod, \vec u, \mathfrak T, \lbrace \mathfrak q_\mrm j \rbrace_{\mrm j \in \moistidx})
$
is the unique solution to \eqref{eq:dry-air-mass-app} -- \eqref{eq:momentum-app} and \eqref{eq:temperature-app-1} -- \eqref{eq:rain-ratio-app-1} with \eqref{bc:momentum} and \eqref{bc:app-hom}. Then by writing, according to \eqref{def:homo-variable}, 
\begin{equation}\label{re-def:homo-variable}
\T = B_\T^{-1}(\mathfrak T + \psi_\T), ~~ q_\mrm j = B_\mrm j^{-1}(\mathfrak q_\mrm j + \psi_\mrm j), ~~~~ \mrm j \in \moistidx,
\end{equation}
$
(\rhod, \vec u, \T, \lbrace  q_\mrm j \rbrace_{\mrm j \in \moistidx})
$
is the unique solution to \eqref{eq:dry-air-mass-app} -- \eqref{eq:rain-ratio-app} with \eqref{bc:momentum} -- \eqref{bc:moisture}.

\section{Well-posedness and non-negativity of the mixing ratios}\label{sec:well-posedness}

In this section, we show first, the unique solution constructed in the last section is stable. Therefore system \eqref{eq:dry-air-mass-app} -- \eqref{eq:rain-ratio-app} with \eqref{bc:momentum} -- \eqref{bc:moisture} is well-posed. The second part of this section is to show the non-negativity of the solution, and thus it is the solution to our original system \eqref{eq:dry-air-mass} -- \eqref{eq:rain-ratio}.

\subsection{Local-in-time well-posedness}

Consider two solutions $ \lbrace (\rho_{\mrm d, i}, \vec u_{i}, \T_{i}, \lbrace q_{\mrm j, i}\rbrace_{\mrm j \in \moistidx} ) \rbrace_{i=1, 2} $ to \eqref{eq:dry-air-mass-app} -- \eqref{eq:rain-ratio-app}, with initial data $ \lbrace(\rho_{\mrm d, in, i}, \vec u_{in, i}, \mathcal T_{in, i}, \lbrace q_{\mrm j, in, i} \rbrace_{\mrm j \in \moistidx}) \rbrace_{i = 1, 2} $, respectively. 

According to \eqref{def:homo-variable} (or \eqref{re-def:homo-variable}), it is equivalent to consider two sets of solutions $ \lbrace (\rho_{\mrm d, i}, \vec u_{i}, \mathfrak T_{i}, \lbrace \mathfrak q_{\mrm j, i}\rbrace_{\mrm j \in \moistidx}) \rbrace_{i=1, 2} $ to \eqref{eq:dry-air-mass-app} -- \eqref{eq:momentum-app} and \eqref{eq:temperature-app-1} -- \eqref{eq:rain-ratio-app-1}, with corresponding initial data.

Let $ \delta(\cdot) $ be the action of taking difference as in section \ref{sec:contracting}. Then estimates \eqref{ctest:density} -- \eqref{ctest:conclusion} still hold for $ \lbrace (\vec u^o_i, \mathfrak T^o_i, \lbrace \mathfrak q_{\mrm j, i}^o \rbrace_{\mrm j \in \moistidx}) \rbrace_{i=1,2} $ replaced by $ \lbrace (\vec u_i, \mathfrak T_i, \lbrace \mathfrak q_{\mrm j, i} \rbrace_{\mrm j \in \moistidx}) \rbrace_{i=1,2} $, with the addition of initial data on the right-hand side of \eqref{ctest:001}, \eqref{ctest:002}, and \eqref{ctest:conclusion}. That is
\begin{gather}
	\begin{aligned}
	& \sup_{0\leq s \leq t} \norm{\delta \rhod(s)}{\Lnorm{2}}^2 \leq c_1 \norm{\delta\rho_{in}}{\Lnorm{2}}^2 + \int_0^t \norm{\delta \vec u(s)}{\Hnorm{1}}^2 \,ds,
	\end{aligned} \label{stability:001}\\
	\begin{aligned}
		& \sup_{0\leq s \leq t} \norm{\delta \vec u(s), \delta \mathfrak T(s), \lbrace \delta \mathfrak q_{\mrm j}(s) \rbrace_{\mrm j \in \moistidx}}{\Lnorm{2}}^2 \\
	& ~~~~ + \int_0^t \norm{\delta \vec u(s),  \delta \mathfrak T(s), \lbrace \delta  \mathfrak q_\mrm j(s) \rbrace_{\mrm j \in \moistidx}}{\Hnorm{1}}^2 \,ds \\
	& \leq c_2 \norm{\delta \vec u_{in}, \delta \mathfrak T_{in}, \lbrace \delta \mathfrak q_{\mrm j,in}\rbrace_{\mrm j \in \moistidx}}{\Lnorm{2}}^2.
	\end{aligned}
	\label{stability:002}
\end{gather}
Therefore, one concludes, from \eqref{stability:001} and \eqref{stability:002}, the uniqueness and the continuous dependency on initial data for solutions to \eqref{eq:dry-air-mass-app} -- \eqref{eq:momentum-app} and \eqref{eq:temperature-app-1} -- \eqref{eq:rain-ratio-app-1}. Therefore this system is well-posed, so as \eqref{eq:dry-air-mass-app} -- \eqref{eq:rain-ratio-app} with \eqref{bc:momentum} -- \eqref{bc:moisture} and corresponding initial data.

\subsection{Non-negativity}\label{sec:non-negativity}
In this section, we {show} that the solutions $ \T, \lbrace q_\mrm{j} \rbrace_{\mrm j \in \moistidx} $ to \eqref{eq:temperature-app} -- \eqref{eq:rain-ratio-app} with non-negative initial data {remain} non-negative.

We remind the reader (see \eqref{def:sign-op}) that
$$
f^- := \dfrac{|f| - f}{2}, ~ f^+ := \dfrac{|f| + f}{2}.
$$
Also, $ \mathfrak c_o $, defined in \eqref{unest:total}, is a uniform constant depending only on $ \mathfrak c_\mrm{in} $.

\subsubsection*{Non-negativity of $ \T $}


Taking the $ L^2 $-inner product of \eqref{eq:temperature-app} with $ - \T^- $ yields, after applying integration by parts, that
\begin{align*}
& \dfrac{1}{2} \dfrac{d}{dt}\norm{(Q_{\mrm{th}}^+)^{1/2} \T^-}{\Lnorm{2}}^2 + \kappa \norm{\nabla \T^-}{\Lnorm{2}}^2 =-  \kappa \bigl(\int_{2\mathbb{T}^2}\partial_z \T \T^- \,d S\bigr)(z)\big|_{z=0}^1 \\
& ~~~~  + \dfrac{1}{2} \int \dt Q_\mrm{th}^+ |\T^-|^2\idx - \int Q_{\mrm{th}}^+ (\vec{u} \cdot \nabla) \T^- \T^- \idx + \int c_\mrm{l} \qr \Vr \dz \T^- \T^- \idx  \\
& ~~~~ + \int Q_\mrm{cp} \dv \vec u |\T^-|^2 \idx - Q_1 \int (\Sev^+ - \Scd^+) |\T^-|^2 \idx \\
& ~~~~ + Q_2 \int (\Sev^+ - \Scd^+) \T^- \idx =: \sum_{i=1}^{7}R_{\T,i}.
\end{align*}
Substituting  \eqref{bc:temperature} into $ R_{\T, 1} $ yields
\begin{gather*}
R_{\T,1} = - \kappa \bigl(\int_{2\mathbb{T}^2} \alpha_{\T}^b(\T^b - \T) \T^- \,dS \bigr)(z)\big|_{z=0}^1 \\
= - \kappa  \bigl( \int_{2\mathbb T^2} \alpha_{\T}^b (\T^b \T^- + |\T^-|^2 ) \,dS \bigr)(z)\big|_{z=0}^1  \leq 0,
\end{gather*}
where condition \eqref{bc:sign} is employed. 
After applying H\"older's inequality, Sobolev embedding inequality, and Young's inequality, one can derive, for $ \forall \delta \in (0,1) $, that
\begin{align*}
& \begin{aligned}
& R_{\T, 2} + R_{\T, 5} \lesssim \norm{\dt Q_\mrm{th}^+, Q_\mrm{cp} \dv \vec u}{\Lnorm{2}}\norm{\T^-}{\Lnorm{3}} \norm{\T^-}{\Lnorm{6}} \\
& ~~~~ \leq  \mathcal N (\mathfrak c_o) \norm{\T^-}{\Lnorm{2}}^{1/2} \norm{\T^-}{\Hnorm{1}}^{3/2} \leq \delta \norm{\nabla T^-}{\Lnorm{2}}^2 + C_\delta \mathcal N(\mathfrak c_o) \norm{\T^-}{\Lnorm{2}}^2, 
\end{aligned}\\
& \begin{aligned}
& R_{\T,3} + R_{\T,4} \lesssim \norm{Q_\mrm{th}^+ \vec u, \qr \Vr }{\Lnorm{\infty}} \norm{\nabla\T^-}{\Lnorm{2}} \norm{\T^-}{\Lnorm{2}} \\
& ~~~~ \leq \delta \norm{\nabla T^-}{\Lnorm{2}}^2 + C_\delta \mathcal N(\mathfrak c_o) \norm{\T^-}{\Lnorm{2}}^2,
\end{aligned}\\
& \begin{aligned}
& R_{\T,6} \leq \mathcal N(\mathfrak c_o) \norm{\T^-}{\Lnorm{2}}^2.
\end{aligned}
\end{align*}
Moreover, since $ \T^+ \cdot \T^- = 0, ~ q_\mrm{vs} \cdot \T^- = 0 $, one has
\begin{align*}
& R_{\T,7} = - Q_2 \int ( c_\mrm{cd} \qv^+ \qc^+ + c_\mrm{cn}(\qv - q_\mrm{vs})^+ q_{\mrm{cn}} ) \T^- \leq 0. 
\end{align*}
Therefore, one can conclude that, after choosing $ \delta $ small enough, 
\begin{equation*}
\dfrac{d}{dt} \norm{(Q_{\mrm{th}}^+)^{1/2} \T^-}{\Lnorm{2}}^2 \leq \mathcal N(\mathfrak c_o) \norm{(Q_{\mrm{th}}^+)^{1/2} \T^-}{\Lnorm{2}}^2,
\end{equation*}
which implies, after applying Gr\"onwall's inequality,
\begin{equation*}
\norm{(Q_{\mrm{th}}^+)^{1/2} \T^-(t)}{\Lnorm{2}}^2 \equiv 0,
\end{equation*}
provided $ \T_\mrm{in} \geq 0 $. 
Thus, we have proved $ \T \geq 0 $. 

\subsubsection*{Non-negativity of $ q_\mrm j $, $ \mrm j \in \moistidx $}

Similarly, taking the $ L^2 $-inner product of \eqref{eq:vapor-ratio-app}, \eqref{eq:cloud-ratio-app}, \eqref{eq:rain-ratio-app}, with $ - \qv^-, - \qc^-, - \qr^- $, respectively, yields, after applying integration by parts, that

\begin{align*}
& \dfrac{1}{2} \dfrac{d}{dt} \norm{ \lbrace q_\mrm{j}^- \rbrace_{\mrm j \in \moistidx} }{\Lnorm{2}}^2 + \norm{\lbrace \nabla q_\mrm{j}^- \rbrace_{\mrm j \in \moistidx}}{\Lnorm{2}}^2 \\
& ~~~~ = - \sum_{\mrm j \in \moistidx}\bigl( \int_{2\mathbb{T}^2} \partial_z q_\mrm j q_\mrm j^- \,dS \bigr)(z)\big|_{z=0}^1 -  \sum_{\mrm j \in \moistidx} \int (\vec u \cdot \nabla) q_\mrm{j}^- q_\mrm j^- \idx\\
& ~~~~ + \int (\dz \Vr + \Vr \dz \log \rhod) |\qr^-|^2 \idx + \int \Vr \dz \qr^- \qr^- \idx \\
& ~~~~ - \int (\Sev^+ - \Scd^+)\qv^- \idx - \int [ \Scd^+ - ( \Sac^+ + \Scr^+) ] \qc^- \idx \\
& ~~~~ - \int [ (\Sac^+ + \Scr^+) - \Sev^+] \qr^- \idx = \sum_{i=8}^{14} R_{q,i}.
\end{align*}
Similarly to $ R_{\T,1} $, one has $ R_{q,8} \leq 0 $. Similarly to $ R_{\T,2} $--$ R_{\T, 4} $, for $ \forall \delta \in (0,1) $, one has
$$
R_{q,9} + R_{q,10} + R_{q,11} \leq \delta \norm{\lbrace \nabla q_\mrm j^- \rbrace_{\mrm j \in \moistidx}}{\Lnorm{2}}^2 + C_\delta \mathcal N(\mathfrak c_o) \norm{\lbrace q_\mrm j^- \rbrace_{\mrm j \in \moistidx}}{\Lnorm{2}}^2.
$$ 
Next, direct calculation yields
\begin{align*}
& \begin{aligned}
& R_{q,12} = - \int \underbrace{\Sev^+}_{\geq 0} \qv^- \idx + \int [ c_\mrm{cd} (\qv^+ - q_{\mrm vs})\qc^+ +  c_\mrm{cn}(\qv - q_\mrm{vs})^+ q_\mrm{cn} ] \qv^- \idx \\
& ~~~~ \leq c_\mrm{cn} \int (\underbrace{\qv - q_\mrm{vs}}_{\leq 0 ~ \text{in} ~ \lbrace \qv < 0 \rbrace})^+ q_\mrm{cn} \qv^- \idx = 0,
\end{aligned}\\
& \begin{aligned}
& R_{q,13} = - \int c_\mrm{cd} (\qv^+ - q_{\mrm vs})\qc^+ \qc^- \idx - \int c_{\mrm{cn}} (\qv - q_\mrm{vs})^+ q_\mrm{cn} \qc^- \idx \\
& ~~~~ + \int c_\mrm{ac}(\underbrace{\qc - q_\mrm{ac}}_{\leq 0 ~ \text{in} ~ \lbrace \qc < 0 \rbrace})^+ \qc^- \idx + \int c_\mrm{cr} \qc^+ \qr^+ \qc^- \idx \leq 0,
\end{aligned}\\
& \begin{aligned}
& R_{q,14} = - \int \underbrace{(\Sac^+ + \Scr ^+)}_{\geq 0} \qr^- \idx + 0
\leq 0,
\end{aligned}
\end{align*}
where we have used the fact $ f^+ \cdot f^- \equiv 0 $ for $ \forall f $. Therefore, one can conclude that, after choosing $ \delta $ small enough, 
\begin{equation*}
\dfrac{d}{dt} \norm{\lbrace q_\mrm j^- \rbrace_{\mrm j \in \moistidx}}{\Lnorm{2}}^2 \leq \mathcal N(\mathfrak c_o) \norm{\lbrace q_\mrm j^- \rbrace_{\mrm j \in \moistidx}}{\Lnorm{2}}^2,
\end{equation*}
which implies, after applying Gr\"onwall's inequality,
\begin{equation*}
\norm{\lbrace q_\mrm j^- (t) \rbrace_{\mrm j \in \moistidx}}{\Lnorm{2}} \equiv 0,
\end{equation*}
provided $ q_{\mrm j , \mrm{in}} \geq 0 $, $\mrm j \in \moistidx$. 
Thus, we have shown $ q_\mrm j \geq 0, ~ \mrm j \in \moistidx $.

\section{Appendix}\label{appendix}

\subsection{An extension lemma}

\begin{lm}\label{lm:extension}
	Let $ s \geq 0 $ and $ h \in \norm{h}{\bHnorm{s}} $, where $ \Gamma := 2\mathbb T^2 \times \lbrace z = 0,1\rbrace = \partial \Omega $ is the boundary of the domain $ \Omega $.
	Then there exists an extension operator $ \Psi $ such that $ \dz \Psi(h)\big|_{z=0,1} = h$ and
	\begin{equation}\label{est:extension}
	\norm{\Psi({h})}{\Hnorm{s+3/2}} \leq C_{\chi_0,s} \norm{h}{\bHnorm{s}},
	\end{equation}
where the positive constant $C_{\chi_0,s}$ depends on $s$ and the cut-off function $\chi_0$ defined, below, in \eqref{def:cut-off}.
\end{lm}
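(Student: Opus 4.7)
The plan is to decompose $h$ into its two boundary components, build an explicit Fourier-based extension on each, and glue them with smooth cut-offs. I split $h = (h_0, h_1)$ with $h_i := h|_{z=i}$, and introduce a cut-off $\chi_0 \in C^\infty([0,1])$ satisfying $\chi_0 \equiv 1$ on a neighbourhood of $z=0$ and $\chi_0 \equiv 0$ on a neighbourhood of $z = 1/2$; these are the cut-offs entering the constant $C_{\chi_0,s}$. Setting $\chi_1(z) := \chi_0(1-z)$ handles the top boundary symmetrically. Because the horizontal domain $2\mathbb{T}^2$ admits a Fourier series, I expand $h_i(x_h) = \sum_{k} \hat{h}_i(k)\, e_k(x_h)$ and define
\[
\Psi(h)(x_h, z) := -\chi_0(z) \sum_k \hat h_0(k)\, \langle k\rangle^{-1} e^{-\langle k\rangle z}\, e_k(x_h) + \chi_1(z) \sum_k \hat h_1(k)\, \langle k\rangle^{-1} e^{-\langle k\rangle (1-z)}\, e_k(x_h),
\]
where $\langle k \rangle := (1 + |k|^2)^{1/2}$, with normalisation chosen so that the horizontal Laplacian acts as multiplication by $-|k|^2$.

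Next, I verify the Neumann condition. At $z=0$, the second term vanishes since $\chi_1$ is identically zero there; for the first term, $\chi_0(0) = 1$ and $\chi_0'(0) = 0$ (by taking the usual plateau cut-off) yield $\partial_z \Psi(h)|_{z=0} = \sum_k \hat h_0(k) e_k(x_h) = h_0$, and the case $z=1$ is symmetric. The regularity bound then follows from Plancherel in the horizontal variables: the $\Hnorm{s+3/2}$ norm is equivalent to a weighted sum over $k$ of integrals of the form $\int_0^1 \langle k\rangle^{2(s+3/2 - j)} |\partial_z^j \varphi_k(z)|^2 \, dz$ with $0 \leq j \leq s+3/2$, where $\varphi_k(z) := \langle k\rangle^{-1} e^{-\langle k\rangle z}$. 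Since $\partial_z^j \varphi_k(z) = (-1)^j \langle k\rangle^{j-1} e^{-\langle k\rangle z}$ and $\int_0^1 e^{-2\langle k\rangle z}\, dz \leq (2\langle k\rangle)^{-1}$, each such integral is controlled by an absolute constant times $\langle k \rangle^{2s}$; summing over $k$ yields $\|h_0\|_{H^s(\Gamma)}^2$, and the analogous bound holds for the $h_1$ piece. Derivatives falling on $\chi_0$ or $\chi_1$ produce strictly lower-order terms bounded by the same computation, explaining the dependence of the constant on $\chi_0$; for non-integer $s$ the estimate is extended by standard interpolation between consecutive integer indices, or equivalently by using the intrinsic Gagliardo seminorm characterisation on the product domain.

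The main technical point is the exact book-keeping of the $\langle k\rangle$-weights: the factor $\langle k\rangle^{-1}$ in front of $e^{-\langle k\rangle z}$ together with the $(2\langle k\rangle)^{-1}$ gain from the $z$-integration must conspire to produce a gain of precisely $3/2$ derivatives — the amount dictated by the standard trace theorem, under which the Neumann trace $u \mapsto \partial_z u|_\Gamma$ loses $3/2$ Sobolev derivatives. The Fourier construction above is tailored to saturate this loss sharply.
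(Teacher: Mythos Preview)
Your proof is correct and follows essentially the same construction as the paper: horizontal Fourier expansion, an exponential profile $e^{-|k|z}$ in the vertical to lift each boundary mode, and a smooth cut-off to localise the two pieces near their respective boundaries. The only cosmetic differences are that the paper uses $|\vec k|$ and treats the zero mode separately with a linear term $\beta_{(0,0)}z$, whereas you use $\langle k\rangle$ to handle all modes uniformly, and the paper glues with $\chi_0$ and $1-\chi_0$ rather than two disjointly supported cut-offs; your write-up actually supplies more detail on the $H^{s+3/2}$ estimate than the paper, which simply asserts it.
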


\begin{proof}
After writing $ h $ in terms of the Fourier expansion in horizontal variables, 
$$
h(\vec{x}_h,z,t) = \sum_{\vec{k} \in \mathbb{Z}^2 } \beta_\vec{k}^z(t) e^{i \pi \vec{k}\cdot \vec{x}_h}, ~ z \in \lbrace 0, 1 \rbrace. 
$$
Next, define $ \Phi_0(h) $ and $ \Phi_1(h) $ by
\begin{equation*}
\begin{aligned}	
\Phi_0(h):= & \beta^0_{(0,0)}(t)z - \sum_{\vec{k} \in \mathbb{Z}^2, |\vec{k}| \neq 0 } \dfrac{\beta_\vec{k}^0(t)}{|\vec{k}|} e^{i \pi \vec{k}\cdot \vec{x}_h} e^{-|\vec{k}| z }, \\
\Phi_1(h):= & \beta^1_{(0,0)}(t)z + \sum_{\vec{k} \in \mathbb{Z}^2, |\vec{k}| \neq 0 } \dfrac{\beta_\vec{k}^1(t)}{|\vec{k}|} e^{i \pi \vec{k}\cdot \vec{x}_h} e^{-|\vec{k}| (1-z) }.
\end{aligned}
\end{equation*}
Also, define a cut-off function $ \chi_0 = \chi_0(z) $ by
\begin{equation}\label{def:cut-off}
\chi_0(z) = \begin{cases}
1, & z \in (0, 1/4),\\
\text{smooth, {monotonically} decreasing}, & z \in (1/4, 3/4),\\
0, & z\in (3/4, 1).
\end{cases}
\end{equation}
Then $ \Psi(h) $ is defined by
\begin{equation}\label{def:extension}
\Psi(h) := \chi_0 \Phi_0(h) + (1-\chi_0) \Phi_1(h). 
\end{equation}
Then $ \Psi({h}) $ is smooth, and satisfies \eqref{est:extension}.
\end{proof}

\subsection{Estimates of source terms in the linear system}

One can write, 
\begin{align*}
&\begin{aligned}
& p^o = \rhod ( \cb + \cb \mathfrak q_\mrm{v}^o) ( 1 + \cb \mathfrak T^o ),
\end{aligned}\\ 
& \begin{aligned}
& \mathcal I_{\vec u, 1} = \rhod ( Q_\mrm{m}^{+,o} + \cb \mathfrak q_{\mrm{r}}^o + \cb ) (\vec u^o + \cb \Vr  \vec{e}_3 ) \cdot \nabla \vec u^o,
\end{aligned}\\ 
& \begin{aligned}
& \mathcal I_{\vec u, 2}= - \rhod Q_\mrm{m}^{+,o} g \vec{e}_3.
\end{aligned}
\end{align*}

{\noindent \bf {Source} terms for the parabolic estimates of $ \vec u $:}

\begin{align}
& \begin{aligned}
& \norm{\rhod^{-1/2} \nabla p^o, \dt p^o}{\Lnorm{2}} 
\lesssim \norm{\nabla \rhod^{1/2}, \dt \rhod}{\Lnorm{2}}  ( 1 + \norm{\mathfrak q_\mrm{v}^o}{\Lnorm{\infty}} ) \\ & ~~~~ ~~~~ \times ( \norm{\mathfrak T^o}{\Lnorm{\infty}} +1 )
+ \norm{\rhod^{1/2}, \rhod}{\Lnorm{\infty}} ( 1 + \norm{ \mathfrak q_\mrm{v}^o, \mathfrak T^o}{\Lnorm{\infty}}) \\
& ~~~~ ~~~~ \times \norm{\nabla \mathfrak q_\mrm{v}^o, \nabla \mathfrak T^o, \dt \mathfrak q_\mrm{v}^o, \dt \mathfrak T^o}{\Lnorm{2}}, \end{aligned} \label{ap-001} \\ & \begin{aligned}
& \norm{\rhod^{-1/2} \mathcal I_{\vec u, 1}, \rhod^{-1/2} \mathcal I_{\vec u, 2}}{\Lnorm{2}} \lesssim (\norm{\rhod^{1/2} Q_\mrm{m}^{+,o},\rhod^{1/2}\mathfrak q_{\mrm r}^o}{\Lnorm{\infty}} + 1) \\& ~~~~ ~~~~ \times ( \norm{\vec{u}^o}{\Lnorm{3}} \norm{\nabla \vec{u}^o}{\Lnorm{6}} + \norm{\nabla\vec u^o}{\Lnorm{2}} + 1)\\
& ~~~~  \lesssim (\norm{\lbrace \rhod^{1/2} \mathfrak q_{\mrm j}^o\rbrace_{\mrm j\in \moistidx} }{\Lnorm{\infty}} + 1 ) 
\times ( \norm{\vec{u}^o}{\Hnorm{1}} \norm{\nabla \vec{u}^o}{\Hnorm{1}} \\
& ~~~~ ~~~~ + \norm{\vec u^o}{\Hnorm{1}} + 1), \end{aligned} \label{ap-002}  \\ & \begin{aligned}
& \norm{\rhod^{-1}\dt(\rhod Q_\mrm{m}^{+,o} )}{\Lnorm{5}} \lesssim \norm{Q_\mrm{m}^{+,o}}{\Lnorm{\infty}} \norm{\dt \log \rhod}{\Lnorm{5}}  \\
&  ~~~~ + \norm{\dt Q_\mrm{m}^{+,o}}{\Lnorm{5}} \lesssim (1 + \norm{\rhod ,  \lbrace{\mathfrak q_{\mrm{j}}^o}\rbrace_{\mrm{j} \in \moistidx} }{\Lnorm{\infty}} ) \\
& ~~ \times \norm{\dt \log\rhod,  \lbrace{\dt \mathfrak q_{\mrm{j}}^o} \rbrace_{\mrm{j} \in \moistidx}} {\Lnorm{5}} , \end{aligned} \label{ap-003}  \\ & \begin{aligned}
& \norm{\rhod^{-1/2} \dt \mathcal I_{\vec{u},1}, \rhod^{-1/2} \dt \mathcal I_{\vec u, 2} }{\Lnorm{3/2}} \lesssim (\norm{ \rhod^{1/2}, \lbrace \mathfrak q_\mrm{j}^o \rbrace_{\mrm{j}\in \moistidx}}{\Lnorm{\infty}} + 1)\\
& ~~~~ ~~~~ \times \norm{\dt \rhod^{1/2}, \lbrace \dt \mathfrak q_\mrm{j}^o \rbrace_{\mrm{j}\in \moistidx}}{\Lnorm{2}} ( (\norm{\vec{u}^o}{\Lnorm{\infty}} + 1) \norm{\nabla \vec u^o}{\Lnorm{6}} + 1 )\\
& ~~~~ + \norm{\lbrace \mathfrak q_\mrm{j}^o \rbrace_{\mrm{j}\in \moistidx}}{\Lnorm{\infty}}  \bigl( \norm{\rhod^{1/2}\vec u^o, \rhod^{1/2}}{\Lnorm{6}} \norm{ \nabla\dt \vec{u}^o}{\Lnorm{2}}  \\
& ~~~~ ~~~~ + (\norm{\rhod^{1/2}\dt \vec u^o}{\Lnorm{6}} + 1)  \norm{\nabla \vec u^o }{\Lnorm{2}}  \bigr).
\end{aligned} \label{ap-004}  \end{align}

{\noindent \bf  Source terms for  the elliptic estimates of $\vec u $:}

\begin{align}
& \begin{aligned}
& \norm{\nabla p^o}{\Lnorm{2}} \lesssim \norm{\nabla \rhod}{\Lnorm{6}}(\norm{\mathfrak q_\mrm{v}^o}{\Lnorm{6}} + 1)(\norm{\mathfrak T^o}{\Lnorm{6}} + 1) \\
& ~~~ ~ + \norm{\rhod}{\Lnorm{\infty}}(\norm{\mathfrak q_\mrm{v}^o, \mathfrak T^o}{\Lnorm{6}} + 1) \norm{\nabla \mathfrak q_\mrm{v}^o,  \nabla \mathfrak T^o}{\Lnorm{3}},
\end{aligned}\label{ap-101}  \\ & \begin{aligned}
& \norm{\mathcal I_{\vec u, 1},\mathcal I_{\vec u, 2}}{\Lnorm{2}} \lesssim \norm{\rhod}{\Lnorm{\infty}}\bigl( \norm{\moistvar}{\Lnorm{\infty}}+1\bigr) \\
& ~~~~ \times \bigl((\norm{\vec u^o}{\Lnorm{6}}+1) \norm{\nabla \vec u^o}{\Lnorm{3}} + 1\bigr),
\end{aligned}\label{ap-102} \\ & \begin{aligned}
& \norm{\nabla^2 p^o}{\Lnorm{2}} \lesssim \norm{\rhod, \mathfrak q_\mrm{v}^o, \mathfrak T^o }{\Lnorm{\infty}} \norm{\nabla \mathfrak T^o, \nabla \rhod }{\Lnorm{6}}\norm{\nabla \mathfrak q_\mrm{v}^o,  \nabla \mathfrak T^o }{\Lnorm{3}} \\
& ~~~~ + \norm{\rhod, \mathfrak q_\mrm{v}^o, \mathfrak T^o}{\Lnorm{\infty}}^2 \norm{\nabla^2 \rhod, \nabla^2 \mathfrak q_\mrm{v}^o, \nabla^2 \mathfrak T^o}{\Lnorm{2}},
\end{aligned} \label{ap-103}\\ & \begin{aligned}
& \norm{\nabla \mathcal I_{\vec{u},1},\nabla \mathcal I_{\vec{u},2}}{\Lnorm{2}} \lesssim (\norm{\vec u^o}{\Hnorm{2}}^2 + 1) \norm{\rhod, \lbrace \mathfrak q_\mrm{j}^o \rbrace_{\mrm{j}\in \moistidx}}{\Lnorm{\infty}}  \\
& ~~~~ \times ( \norm{\rhod, \lbrace \mathfrak q_\mrm{j}^o \rbrace_{\mrm{j}\in \moistidx}}{\Lnorm{\infty}} + \norm{\nabla \rhod, \lbrace \nabla \mathfrak q_\mrm{j}^o \rbrace_{\mrm{j}\in \moistidx}}{\Lnorm{3}} ),
\end{aligned} \label{ap-104}
\end{align}

Similarly, one can easily check that,
\begin{align*}
& \begin{aligned}
& \mathcal I_{\T, 1} = ( Q_\mrm{th}^{+,o} + \cb \mathfrak q_\mrm{r}^o + \cb ) (\vec u^o + \cb \Vr \vec{e}_3 ) \cdot \nabla \mathfrak T^o \\
& ~~~~ ~~~~  + Q_\mrm{cp}^o (\mathfrak T^o + \cb ) \dv \vec u^o ,
\end{aligned}	\\
& \begin{aligned}
& \mathcal I_{\T, 2}' = Q_\mrm{th}^{+,o} ( \cb \mathfrak T^o w^o + \cb w^o + \vec{u}^o \cdot\nabla \cb + \cb  ) + \cb \mathfrak q_\mrm{r}^o \mathfrak T^o \\
& ~~~~ ~~~~ + \cb \mathfrak q_\mrm{r}^o + \cb \mathfrak T^o + \cb ,
\end{aligned} \\
& \begin{aligned}
& \mathcal I_{\mrm{j},1} = (\cb \vec{u}^o + \cb \vec{e}_3 ) \cdot \nabla \mathfrak q_\mrm{j}^o, 
\end{aligned} \\
& \begin{aligned}
& \mathcal I_{\mrm{j},2}' = ( \mathfrak q_\mrm{j}^o + \cb )\vec{u}^o \cdot \nabla \cb + \cb \mathfrak q_\mrm{j}^o + \cb.
\end{aligned} 
\end{align*}

{\noindent \bf Source terms for the estimates of thermo-moisture variables, part 1:}
\begin{align}
& \begin{aligned}
& \norm{\mathcal I_{\T, 1} }{\Lnorm{2}} \lesssim ( 1+ \norm{\lbrace \mathfrak q_\mrm{j}^o \rbrace_{\mrm{j} \in \moistidx}}{\Lnorm{\infty}}) \\
& ~~~~ ~~~~ \times (1 + \norm{\vec u^o, \mathfrak T^o}{\Lnorm{6}} \norm{\nabla\mathfrak T^o, \nabla \vec u^o }{\Lnorm{3}}  ),
\end{aligned}\label{ap-201} \\ & \begin{aligned}
& \norm{\lbrace \mathcal I_{\mrm{j},1} \rbrace_{\mrm{j}\in \moistidx }  }{\Lnorm{2}} \lesssim (1 + \norm{\vec u^o}{\Lnorm{6}}) \norm{\lbrace \nabla \mathfrak q_\mrm{j}^o \rbrace_{\mrm{j} \in \moistidx} }{\Lnorm{3}},
\end{aligned}\label{ap-202}\\& \begin{aligned}
& \norm{\dt \mathcal I_{\T, 1}, \lbrace \dt  \mathcal I_{\mrm{j},1} \rbrace_{\mrm{j}\in \moistidx }  }{\Lnorm{3/2}} \lesssim \norm{ \lbrace \dt \mathfrak q_\mrm{j}^o  \rbrace_{\mrm{j}\in \moistidx} }{\Lnorm{2}} \\
& ~~~~ ~~~~ \times (1 + \norm{\vec{u}^o, \mathfrak T^o}{\Lnorm{\infty}}) \norm{\nabla \mathfrak T^o, \nabla \vec u^o}{\Lnorm{6}} \\
& ~~~~ + \bigl( 1+ \norm{\lbrace \mathfrak q_\mrm{j}^o\rbrace_{\mrm j \in \moistidx}}{\Lnorm{\infty}}\bigr) \bigl(  ( \norm{\dt \mathfrak T^o }{\Lnorm{2}} + 1) \norm{\nabla \vec u^o}{\Lnorm{6}} \\
& ~~~~ ~~~~ + \norm{\nabla \mathfrak T^o, \lbrace \nabla \mathfrak q_\mrm{j}^o \rbrace_{\mrm j \in \moistidx}}{\Lnorm{2}} ( \norm{\dt \vec u^o }{\Lnorm{6}} + 1) \bigr) \\
& ~~~~ + ( 1 + \norm{\lbrace \mathfrak q_\mrm{j}^o \rbrace_{\mrm j \in \moistidx}}{\Lnorm{\infty}})(1 + \norm{\vec u^o, \mathfrak T^o }{\Lnorm{6}})\\
& ~~~~ ~~~~ \times \norm{\nabla\dt \mathfrak T^o, \nabla \dt \vec u^o, \lbrace \nabla\dt \mathfrak q_\mrm{j}^o \rbrace_{\mrm j \in \moistidx}}{\Lnorm{2}}, \\
\end{aligned}\label{ap-203} \\& \begin{aligned}
& \norm{\nabla \mathcal I_{\T, 1} , \lbrace \nabla  \mathcal I_{\mrm{j},1} \rbrace_{\mrm{j}\in \moistidx }  }{\Lnorm{2}} \lesssim ( \norm{\lbrace \nabla \mathfrak q_\mrm{j}^o \rbrace_{\mrm j \in \moistidx}}{\Lnorm{3}} + 1) \\
& ~~~~ ~~~~ \times ( 1+ \norm{\vec u^o, \mathfrak T^o}{\Lnorm{\infty}} ) \norm{\nabla \mathfrak T^o, \nabla \vec u^o }{\Lnorm{6}}\\
& ~~~~ + ( 1 + \norm{\lbrace \mathfrak q_\mrm{j}^o\rbrace_{\mrm j \in \moistidx}}{\Lnorm{\infty}}) ( 1 + \norm{\nabla\vec u^o }{\Lnorm{6}})\\
& ~~~~ ~~~~ \times ( 1 + \norm{\nabla \mathfrak T^o, \lbrace \nabla \mathfrak q_\mrm{j}^o \rbrace_{\mrm j \in \moistidx}}{\Lnorm{3}} ) \\
& ~~~~ + ( 1 + \norm{\lbrace \mathfrak q_\mrm{j}^o \rbrace_{\mrm j \in \moistidx}}{\Lnorm{\infty}} ) ( 1 + \norm{\vec u^o, \mathfrak T^o }{\Lnorm{\infty}})\\
& ~~~~ ~~~~ \times \norm{\nabla^2 \mathfrak T^o, \nabla^2 \vec u^o, \lbrace \nabla^2 \mathfrak q_\mrm{j}^o \rbrace_{\mrm j \in \moistidx}}{\Lnorm{2}}.
\end{aligned}\label{ap-204}
\end{align}

{\noindent\bf Source estimates for the estimates of thermo-moisture variables, part 2:}

To estimates, $ \mathcal I_{\mrm{j}, 2} $, $ \mrm{j} \in \lbrace \T, \mrm{v}, \mrm{c}, \mrm r \rbrace $, we need to first deal with $ \mathcal I_{\mrm{j}, 2}' $, $ \mrm{j} \in \lbrace \T, \mrm{v}, \mrm{c}, \mrm r \rbrace $. 

\begin{align}
& \begin{aligned}
& \norm{\lbrace \mathcal I_{\mrm{j}, 2}' \rbrace_{\mrm{j}\in \thmoistidx} }{\Lnorm{2}} \lesssim \norm{\lbrace \mathfrak q_\mrm{j}^o   \rbrace_{\mrm j \in \moistidx}, \mathfrak T^o, \vec u^o }{\Lnorm{6}}^3 + 1,
\end{aligned}\label{ap-301} \\
& \begin{aligned}
& \norm{\lbrace \dt \mathcal I_{\mrm{j}, 2}'\rbrace_{\mrm{j}\in \thmoistidx}}{\Lnorm{3/2}} \lesssim ( \norm{\lbrace \dt  \mathfrak q_\mrm{j}^o \rbrace_{\mrm j \in \moistidx},\dt \mathfrak T^o}{\Lnorm{2}} + 1 ) \\
& ~~~~ ~~~~ \times  ( \norm{\vec u^o}{\Lnorm{6}} + 1) 
( \norm{\moistvar ,\mathfrak T^o}{\Lnorm{\infty}} + 1) \\
& ~~~~ +( \norm{\lbrace \mathfrak q_\mrm{j}^o \rbrace_{\mrm j \in \moistidx}, \mathfrak T^o }{\Lnorm{6}} + 1)^2 ( \norm{\dt \vec u^o}{\Lnorm{3}} + 1),
\end{aligned}\label{ap-302}\\& \begin{aligned}
& \norm{\lbrace \nabla \mathcal I_{\mrm j, 2}'\rbrace_{\mrm{j}\in \thmoistidx}}{\Lnorm{2}} \lesssim ( \norm{\lbrace \nabla \mathfrak q_\mrm{j}^o \rbrace_{\mrm j \in \moistidx}, \nabla  \mathfrak T^o, \nabla \vec{u}^o }{\Lnorm{6}} + 1) \\
& ~~~~ ~~~~ \times ( \norm{\moistvar,\mathfrak T^o,\vec u^o }{\Lnorm{6}} + 1)^2.
\end{aligned}\label{ap-303}
\end{align}
On the other hand, $ \lbrace \mathcal T_{\mrm j,2}- \mathcal T_{\mrm j, 2}'\rbrace_{\mrm j \in \thmoistidx }  $ are Lipschitz functions of $$ \lbrace \mathfrak T^o, \lbrace \mathfrak q_\mrm{j}^o \rbrace_{\mrm j \in \moistidx}, \dz \log \rhod \rbrace. $$
In addition, one can easily check that
\begin{gather*}
| \Sev^{+,o} | \lesssim \cb + \abs{\mathfrak T^o, \mathfrak q_\mrm{r}^o}{2} , ~~~~ 
| \Scd^{+,o} | \lesssim \cb + \abs{\mathfrak q_\mrm{c}^o, \mathfrak q_\mrm{r}^o}{2}, \\
| \Sac^{+,o} | \lesssim  \cb + \abs{\mathfrak q_\mrm{c}^o}{}, ~~~~
| \Scr^{+,o} | \lesssim \cb + \abs{\mathfrak q_\mrm{c}^o, \mathfrak q_\mrm{r}^o}{2}.
\end{gather*}
It is not hard to obtain, with direct calculation, that,
\begin{align}
& \begin{aligned}
& \norm{\lbrace \mathcal T_{\mrm j,2}- \mathcal T_{\mrm j, 2}'\rbrace_{\mrm j \in \thmoistidx }}{\Lnorm{2}} \lesssim 1 + \norm{\mathfrak T^o, \lbrace \mathfrak q_\mrm{j}^o\rbrace_{\mrm j \in \moistidx}, q_\mrm{vs}^o }{\Lnorm{6}}^3\\
& ~~~~ + ( \norm{\mathfrak q_\mrm{r}^o}{\Lnorm{6}} + 1) ( \norm{\dz \log \rhod }{\Lnorm{3}} + 1 ),
\end{aligned}\label{ap-304}\\& \begin{aligned}
& \norm{\lbrace \dt \mathcal T_{\mrm j,2}- \dt \mathcal T_{\mrm j, 2}'\rbrace_{\mrm j \in \thmoistidx }}{\Lnorm{3/2}} \lesssim (1 + \norm{\mathfrak T^o, \lbrace \mathfrak q_\mrm{j}^o  \rbrace_{\mrm j \in \moistidx}}{\Lnorm{\infty}}^2)\\
& ~~~~ ~~~~ \times \norm{\dt \mathfrak T^o, \lbrace \dt \mathfrak q_\mrm{j}^o \rbrace_{\mrm j \in \moistidx}, \dt q_\mrm{vs}^o }{\Lnorm{2}}  + ( \norm{\mathfrak q_\mrm{r}^o }{\Lnorm{\infty}} + 1 ) \\
& ~~~~ ~~~~ \times ( \norm{\dz \dt \log \rhod }{\Lnorm{2}} + 1 ) 
+ ( \norm{\dt \mathfrak q_\mrm{r}^o}{\Lnorm{2}} + 1 )\\
& ~~~~ ~~~~ \times  ( \norm{\dz \log \rhod }{\Lnorm{6}} + 1 ),
\end{aligned}\label{ap-305}\\& \begin{aligned}
& \norm{\lbrace \nabla \mathcal T_{\mrm j,2}- \nabla \mathcal T_{\mrm j, 2}'\rbrace_{\mrm j \in \thmoistidx }}{\Lnorm{2}} \lesssim (1 + \norm{\mathfrak T^o, \lbrace \mathfrak q_\mrm{j}^o  \rbrace_{\mrm j \in \moistidx} }{\Lnorm{\infty}}^2)\\
& ~~~~ ~~~~ \times \norm{\nabla \mathfrak T^o, \lbrace \nabla \mathfrak q_\mrm{j}^o \rbrace_{\mrm j \in \moistidx}, \nabla q_\mrm{vs}^o }{\Lnorm{2}} + (\norm{\mathfrak q_\mrm{r}^o }{\Lnorm{\infty}} + 1) \\
& ~~~~ ~~~~ \times ( \norm{\nabla \dz \log \rhod}{\Lnorm{2}} + 1) +  ( \norm{\nabla \mathfrak q_\mrm{r}^o}{\Lnorm{3}}+ 1) \\
& ~~~~ ~~~~ \times (\norm{\dz \log \rhod }{\Lnorm{6}} + 1).
\end{aligned}\label{ap-306}
\end{align}

\section*{Acknowledgements}
SD thanks the support by the Austrian Science Fund (FWF) (Grant DOI: 10.55776/F65). XL and EST would like to thank the Freie Universit\"{a}t Berlin for the kind hospitality during which part of this work was completed. RK, XL and EST  thank the Deutsche Forschungsgemeinschaft (DFG) for partial funding through Project C06 of CRC 1114 “Scaling Cascades in Complex Systems”, Project Number 235221301, which provided an inspiring environment for their research.
 The work of RK and EST was also supported in part by the DFG Research Unit FOR 5528 on Geophysical Flows.

\bibliographystyle{plain}

\end{document}